\theoremstyle{plain}
\newtheorem{mydef}{Definition}[section]
\newtheorem{mythm}{Theorem}[section]
\newtheorem{mylemma}{Lemma}[section]
\newtheorem{mycor}{Corollary}[section]
\newtheorem{Ex}{Example}[section]
\begin{document}
\title{Solving Sequential Linear \textit{M}-Fractional Differential Equations With Constants Coefficients}
\author{ V.Padmapriya$^1$ }
\address{ $^1$ Research Scholar, \\ VIT University,Chennai Campus,\\ India} 
\email{v.padmapriya2015@vit.ac.in}
\author{\ M.Kaliyappan $^2$}
\address{$^2$ Division of Mathematics, School of Advanced Sciences,\\
VIT University,Chennai Campus,\\India}
\email{kaliyappan.m@vit.ac.in}

\begin{abstract}
Fractional calculus is a powerful and effective tool for modelling nonlinear systems. The \textit{M}-derivative is the generalisation of alternative fractional derivative introduced by Katugampola\cite{r6}.  This \textit{M}-derivative obey the properties of integer calculus. In this paper, we present the method for solving \textit{M}-fractional sequential linear differential equations with constant coefficients for $\alpha\geq0$ and $\beta>0$. Existence and Uniqueness of the solutions for the n$^{th}$ order sequential linear \textit{M}-fractional differential equations are discussed in detail. We have present illustration for homogeneous and non homogeneous case.\\
\textbf{Mathematics Subject Classification: 26A33, 34AXX.}
\end{abstract}

\keywords{Sequential Linear Fractional Differential Equations, \textit{M}-Fractional Derivative, Existence and Uniqueness Theorem, Fractional Method of Variation of parameters}

\maketitle
\section{Introduction}
While L'Hospital has proposed the idea of fractional derivative in the $17^{th}$ century, several researchers concerted fractional derivative in the recent centuries. Riemann-Liouville, Caputo and other fractional derivatives are defined on the basis of fractional integral form \cite{r8,r12,r13}.\\
Recently, Khalil et al.\cite{r7} and Katugampola \cite{r6} proposed fractional derivatives in the limit form as in usual derivative such as conformable fractional derivative and alternative fractional derivative. Based on these derivative, Sousa and Oliveira \cite{r15} introduced \textit{M}-fractional derivatives which satisfies properties of integer-order calculus.\\
Theory and applications of the sequential linear fractional differential equations involving Hadamard, Riemann-Liouville, Caputo and Conformable derivatives have been investigated in \cite{r1,r2,r3,r4,r9,r10,r11}.\\
Lately, Gokdogan et al \cite{r5} have proved existence and uniqueness theorems for solving sequential linear conformable fractional differential equations. Unal et al \cite{r14} provide method to solve sequential linear conformable fractional differential equations with constants coefficients. In this work, We present Existence and Uniqueness theorems and solutions of sequential linear \textit{M}-fractional differential equations.\\
The arrangement of this paper is as following: In section 2 we present the concept of \textit{M}-fractional derivative. In section 3 we provide existence and uniqueness theorems for sequential Linear \textit{M}-fractional differential equations. In section 4 we propose the solutions of sequential Linear \textit{M}-fractional differential equations. In section 5 we present solutions of Non-homogeneous case. Finally, Conclusion is present in section 6.\\

\section{\textit{M}-Fractional Calculus}
In this section, we give some necessary definitions and theorems of M – derivative which are explained in \cite{r15}.

\begin{mydef}
Let $ f:[0,\infty)\rightarrow \mathbb{R}$ be a function and $t>0$. Then for $0<\alpha<1$, the \textit{M}-fractional derivative of $f$ of order $\alpha$ is defined as

	$$D^{\alpha,\beta}_{M} f(t)=\lim_{\epsilon \to 0} \frac{⁡f(t\mathbb{E}_{\beta}(\epsilon t^{-\alpha})-f(t)}{\epsilon}$$
	
Where $\mathbb{E}_{\beta} (.), \beta>0$ is Mittag-Leffler function with one parameter.
	
If $f$ is \textit{M}-differentiable in some interval $(0,a),a>0$ and $$\lim_{t \to 0^+}D^{\alpha,\beta}_{M} f(t)$$⁡ exists, then we have 
		$$D^{\alpha,\beta}_{M}f(0)=\lim_{t\to 0^+}D^{\alpha,\beta}_{M} f(t)$$
\end{mydef}

\begin{mythm}
Let $0<\alpha \leq 1,\beta>0, a,b\in\mathbb{R}$ and $f,g$ be \textit{M} - differentiable at a point $t>0$. Then 
\begin{enumerate}
\item $D^{\alpha,\beta}_{M} (af+bg)(t)=aD^{\alpha,\beta}_{M} f(t)+bD^{\alpha,\beta}_{M} g(t)$ for all $a,b\in \mathbb{R}$.
\item $D^{\alpha,\beta}_{M}(f\cdot g) (t)= f(t) D^{\alpha,\beta}_{M}g(t) +g(t) D^{\alpha,\beta}_{M}f(t)$
\item  $ D^{\alpha,\beta}_{M}(\frac{f}{g})(t)=\frac{g(t) D^{\alpha,\beta}_{M} f(t)-f(t) D^{\alpha,\beta}_{M} g(t))}{[g(t)]^{2}}$
\item $D^{\alpha,\beta}_{M}(c)=0$, where $f(t)=c$ is a constant. 
\item $D^{\alpha,\beta}_{M}t^{a}=\frac{a}{\Gamma(\beta+1)} t^{a-\alpha}$
\item Moreover, $f$ is differentiable, then $D^{\alpha,\beta}_{M}f(t)=\frac{t^{1-\alpha}}{\Gamma(\beta+1)} \frac{d f(t)}{dt}$  
\end{enumerate}
Additionally,\textit{M}-derivatives of certain functions as follows:
\begin{enumerate}
\item  $D^{\alpha,\beta}_{M}(sin(\frac{1}{\alpha}t^{\alpha}))=\frac{cos(\frac{1}{\alpha}t^{\alpha})}{\Gamma(\beta+1)}$
\item $D^{\alpha,\beta}_{M}(cos(\frac{1}{\alpha}t^{\alpha}))=\frac{-sin(\frac{1}{\alpha}t^{\alpha})}{\Gamma(\beta+1)}$
\item $D^{\alpha,\beta}_{M}(e^{\frac{t^{\alpha}}{\alpha}})=\frac{e^{\frac{t^{\alpha}}{\alpha}}}{\Gamma(\beta+1)}$ 
\end{enumerate}
\end{mythm}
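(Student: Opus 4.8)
The plan is to reduce the whole theorem to the single identity in part~(6), from which parts~(1)--(5) and the three special-function formulas then follow by elementary calculus.

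First I would record the behaviour of the one-parameter Mittag-Leffler function near $0$: since $\mathbb{E}_{\beta}(z)=\sum_{k\ge 0} z^{k}/\Gamma(\beta k+1)$ is analytic at $0$ with $\mathbb{E}_{\beta}(0)=1$, we have $\mathbb{E}_{\beta}(z)=1+\dfrac{z}{\Gamma(\beta+1)}+O(z^{2})$ as $z\to 0$. Setting $z=\epsilon t^{-\alpha}$ and
$$h(\epsilon):=t\,\mathbb{E}_{\beta}(\epsilon t^{-\alpha})-t=t\bigl(\mathbb{E}_{\beta}(\epsilon t^{-\alpha})-1\bigr),$$
this gives $h(\epsilon)=\dfrac{t^{1-\alpha}}{\Gamma(\beta+1)}\,\epsilon+o(\epsilon)$; in particular, since $t>0$, $h(\epsilon)\to 0$ and $h(\epsilon)\ne 0$ for all sufficiently small $\epsilon\ne 0$, and $h(\epsilon)/\epsilon\to t^{1-\alpha}/\Gamma(\beta+1)$.

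To prove~(6), assuming $f$ differentiable at $t$, I would rewrite the difference quotient defining $D^{\alpha,\beta}_{M}f(t)$ as
$$\frac{f(t+h(\epsilon))-f(t)}{\epsilon}=\frac{f(t+h(\epsilon))-f(t)}{h(\epsilon)}\cdot\frac{h(\epsilon)}{\epsilon},$$
and let $\epsilon\to 0$: the first factor tends to $f'(t)$ because $h(\epsilon)\to 0$, and the second tends to $t^{1-\alpha}/\Gamma(\beta+1)$ by the expansion above, yielding $D^{\alpha,\beta}_{M}f(t)=\dfrac{t^{1-\alpha}}{\Gamma(\beta+1)}f'(t)$. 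Parts~(1)--(5) then drop out: (1) is linearity of $d/dt$; (2) and~(3) are the Leibniz and quotient rules for $d/dt$ multiplied through by $t^{1-\alpha}/\Gamma(\beta+1)$; (4) is $\tfrac{d}{dt}c=0$; and (5) is $\tfrac{d}{dt}t^{a}=at^{a-1}$, so $D^{\alpha,\beta}_{M}t^{a}=\dfrac{t^{1-\alpha}}{\Gamma(\beta+1)}\,at^{a-1}=\dfrac{a}{\Gamma(\beta+1)}t^{a-\alpha}$. For the stated generality one can also read (1) directly off the limit definition, and prove (2)--(3) from it by the usual add-and-subtract manipulation, using that M-differentiability at $t$ forces continuity at $t$ (because $f(t+h(\epsilon))-f(t)=\epsilon\cdot\frac{f(t+h(\epsilon))-f(t)}{\epsilon}\to 0$). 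The three closing formulas follow from~(6) via the chain rule: e.g. $\tfrac{d}{dt}\sin(\tfrac1\alpha t^{\alpha})=t^{\alpha-1}\cos(\tfrac1\alpha t^{\alpha})$, hence $D^{\alpha,\beta}_{M}\sin(\tfrac1\alpha t^{\alpha})=\dfrac{t^{1-\alpha}}{\Gamma(\beta+1)}t^{\alpha-1}\cos(\tfrac1\alpha t^{\alpha})=\dfrac{\cos(\tfrac1\alpha t^{\alpha})}{\Gamma(\beta+1)}$, and similarly for $\cos(\tfrac1\alpha t^{\alpha})$ and $e^{t^{\alpha}/\alpha}$.

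The main obstacle is the passage from the $\epsilon$-difference quotient to an ordinary $h$-difference quotient: one must be sure that $h(\epsilon)$ is a bona fide perturbation of $t$ (nonzero for small $\epsilon\ne 0$, tending to $0$) and that $h(\epsilon)/\epsilon$ has the asserted limit, and this is exactly where the analyticity of $\mathbb{E}_{\beta}$ at $0$ and the hypothesis $t>0$ are used. Once that is in place, the remainder is routine bookkeeping with the classical differentiation rules.
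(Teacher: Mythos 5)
Your proof is correct. Note that the paper itself gives no proof of this theorem: Section~2 merely quotes it as background from the reference on the local $M$-derivative, so there is nothing to compare against line by line. Your argument — expanding $\mathbb{E}_{\beta}(\epsilon t^{-\alpha})=1+\epsilon t^{-\alpha}/\Gamma(\beta+1)+O(\epsilon^{2})$, substituting $h(\epsilon)=t\,\mathbb{E}_{\beta}(\epsilon t^{-\alpha})-t$ to reduce the $\epsilon$-quotient to an ordinary difference quotient, establishing part~(6) first and deriving the rest from it (with the direct limit manipulations covering (1)--(3) in the generality of mere $M$-differentiability) — is exactly the standard one used in that source, and you have correctly identified the one point needing care, namely that $t>0$ guarantees $h(\epsilon)\ne 0$ for small $\epsilon\ne 0$ and $h(\epsilon)/\epsilon\to t^{1-\alpha}/\Gamma(\beta+1)$.
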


\begin{mythm}
Let $a>0$ and $t\geq a$. Then, the \textit{M}-integral of order $α$ of a function $f$ is defined by
$$_{M} I^{\alpha,\beta}_{a}f(t)= \Gamma(\beta+1) \int_{a}^{t} \frac{f(x)}{x^{1-\alpha}}dx$$
\end{mythm}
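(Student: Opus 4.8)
\section*{Proof proposal}

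The displayed statement is really a definition of the M-integral, but what makes it a \emph{theorem} is the implicit assertion that this operator inverts the M-derivative of Definition 2.1 — i.e.\ it is the fractional analogue of the fundamental theorem of calculus. The plan is therefore to establish the two inversion identities: for $f$ continuous on $[a,\infty)$, $a>0$, and $0<\alpha\le 1$,
\[
D^{\alpha,\beta}_{M}\!\left({}_{M}I^{\alpha,\beta}_{a}f\right)(t)=f(t),\qquad
{}_{M}I^{\alpha,\beta}_{a}\!\left(D^{\alpha,\beta}_{M}f\right)(t)=f(t)-f(a),\quad t\ge a .
\]

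For the first identity, set $g(t):={}_{M}I^{\alpha,\beta}_{a}f(t)=\Gamma(\beta+1)\int_{a}^{t}x^{\alpha-1}f(x)\,dx$. Since $f$ is continuous and $a>0$, the integrand $x\mapsto x^{\alpha-1}f(x)$ is continuous on $[a,t]$, so the classical fundamental theorem of calculus gives that $g$ is differentiable with $g'(t)=\Gamma(\beta+1)\,t^{\alpha-1}f(t)$. Now invoke part~(6) of Theorem~2.2, which expresses the M-derivative of a differentiable function as $D^{\alpha,\beta}_{M}g(t)=\frac{t^{1-\alpha}}{\Gamma(\beta+1)}g'(t)$; substituting the expression for $g'$, the factors $\Gamma(\beta+1)$ cancel and $t^{1-\alpha}t^{\alpha-1}=1$, leaving exactly $f(t)$.

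For the second identity, assume $f$ is additionally $C^{1}$. By Theorem~2.2(6), $D^{\alpha,\beta}_{M}f(x)=\frac{x^{1-\alpha}}{\Gamma(\beta+1)}f'(x)$, so
\[
{}_{M}I^{\alpha,\beta}_{a}\!\left(D^{\alpha,\beta}_{M}f\right)(t)
=\Gamma(\beta+1)\int_{a}^{t}x^{\alpha-1}\cdot\frac{x^{1-\alpha}}{\Gamma(\beta+1)}f'(x)\,dx
=\int_{a}^{t}f'(x)\,dx=f(t)-f(a),
\]
again by the ordinary fundamental theorem of calculus. The same computation shows the M-integral is well defined whenever $x\mapsto x^{\alpha-1}f(x)$ is integrable on $[a,t]$, which holds for continuous $f$ precisely because the hypothesis $a>0$ keeps the weight $x^{\alpha-1}$ bounded on $[a,t]$.

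The only real subtlety — the ``hard part'' — is bookkeeping of hypotheses rather than any nontrivial estimate: one must record the regularity on $f$ (continuity for the first identity, $C^{1}$ for the second) needed to apply the classical fundamental theorem of calculus and Theorem~2.2(6), and one must observe that the condition $a>0$ in the statement is exactly what prevents the singular weight $x^{\alpha-1}$ from destroying integrability at the left endpoint. Everything else reduces to the cancellation $\frac{t^{1-\alpha}}{\Gamma(\beta+1)}\cdot\Gamma(\beta+1)\,t^{\alpha-1}=1$, which is also consistent with Theorem~2.2(5) applied to the monomial weight.
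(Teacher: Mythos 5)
Your computations are correct, but you should know that the paper offers no proof of this statement at all: despite being placed in a theorem environment, it is simply the \emph{definition} of the M-integral recalled verbatim from reference [15] in the preliminaries section, and no proof follows it in the source. You correctly diagnosed this ("the displayed statement is really a definition") and then chose to prove the only nontrivial content one could reasonably attach to it, namely the two inversion identities. The first of these, $D^{\alpha,\beta}_{M}\bigl({}_{M}I^{\alpha,\beta}_{a}f\bigr)(t)=f(t)$, is exactly the paper's Theorem 2.3, which the paper also states without proof; your argument via the classical fundamental theorem of calculus combined with the representation $D^{\alpha,\beta}_{M}g(t)=\frac{t^{1-\alpha}}{\Gamma(\beta+1)}g'(t)$ is the standard and correct way to establish it, and your second identity (with the added $C^{1}$ hypothesis) is likewise sound. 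Your remark that $a>0$ is what keeps the weight $x^{\alpha-1}$ bounded on $[a,t]$ is a genuine observation the paper never makes explicit. One small bookkeeping slip: the product-rule-free representation you invoke is item (6) of the paper's Theorem 2.1, not Theorem 2.2, so the citation label should be adjusted; nothing mathematical depends on this.
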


\begin{mythm}
Let $a\geq 0$ and $0<\alpha<1$. Also let $f$ be a continuous function such that there exists $_{M} I^{\alpha,\beta}_{a}f$. Then
$$D^{\alpha,\beta}_{M} (_{M} I^{\alpha,\beta}_{a} f(t))=f(t)$$
\end{mythm}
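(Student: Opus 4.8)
The plan is to reduce the statement to the classical fundamental theorem of calculus by means of the representation of the \textit{M}-derivative furnished by Theorem 2.2(6). First I would write $g(t) := {}_{M} I^{\alpha,\beta}_{a} f(t) = \Gamma(\beta+1)\int_{a}^{t} \frac{f(x)}{x^{1-\alpha}}\,dx$. Since $f$ is continuous on $[a,\infty)$ and the weight $x\mapsto x^{1-\alpha}$ is continuous and nonvanishing for $x>0$, the integrand $x\mapsto f(x)/x^{1-\alpha}$ is continuous on $(0,\infty)$; hence by the ordinary fundamental theorem of calculus $g$ is differentiable for $t>0$ and $g'(t) = \Gamma(\beta+1)\,\frac{f(t)}{t^{1-\alpha}}$.

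Because $g$ is now known to be differentiable, Theorem 2.2(6) applies and yields $D^{\alpha,\beta}_{M} g(t) = \frac{t^{1-\alpha}}{\Gamma(\beta+1)}\,g'(t)$. Substituting the expression for $g'(t)$ obtained above, the factors $\Gamma(\beta+1)$ and $t^{1-\alpha}$ cancel, leaving $D^{\alpha,\beta}_{M}\big({}_{M} I^{\alpha,\beta}_{a} f(t)\big) = f(t)$ for every $t>0$, which is the assertion. If one wishes to include $t=a=0$, the improper integral defining $g$ still converges because $f$ is bounded near $0$ and $\int_{0} x^{\alpha-1}\,dx$ converges for $\alpha>0$, and the identity extends to $t=0$ through the limit definition $D^{\alpha,\beta}_{M} g(0) = \lim_{t\to 0^{+}} D^{\alpha,\beta}_{M} g(t) = \lim_{t\to 0^{+}} f(t) = f(0)$ using continuity of $f$.

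I expect the only genuine (and still rather mild) obstacle to be step one: one must argue that $g$ is differentiable so that Theorem 2.2(6) may legitimately be invoked, and this is precisely the point at which the hypothesis that $f$ is continuous is essential — without it the derivative of $g$ need not equal the integrand pointwise. Everything after that is the routine algebraic cancellation described above.
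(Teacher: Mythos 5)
Your argument is correct and is the standard proof of this inversion property: continuity of $f$ guarantees, via the fundamental theorem of calculus, that $g(t)={}_{M}I^{\alpha,\beta}_{a}f(t)$ is differentiable with $g'(t)=\Gamma(\beta+1)\,t^{\alpha-1}f(t)$, and Theorem 2.2(6) then gives $D^{\alpha,\beta}_{M}g(t)=\frac{t^{1-\alpha}}{\Gamma(\beta+1)}g'(t)=f(t)$. Note that the paper itself states this theorem without proof, importing it from the cited work on the local $M$-derivative, and the argument there is exactly the one you give, so your proposal matches the intended proof; your remarks on the improper integral at $a=0$ and the limit definition of $D^{\alpha,\beta}_{M}g(0)$ are a harmless extra.
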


\begin{mythm}
Let $f,g:[a,b]\rightarrow \mathbb{R}$ be two functions such that $f,g$ are differentiable and $0<\alpha<1$. Then
$$\int_{a}^{b}f(x)D^{\alpha,\beta}_{M}g(x)d_{\alpha}x=f(x)g(x)\Bigr|_{a}^{b}-\int_{a}^{b}g(x)D^{\alpha,\beta}_{M}f(x)d_{\alpha}x$$
where $d_{\alpha}x=\frac{\Gamma(\beta+1)}{x^{1-\alpha}}dx$
\end{mythm}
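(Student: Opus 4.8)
The plan is to reduce the asserted $M$-integration-by-parts identity to the classical integration-by-parts formula, using part (6) of Theorem~2.1, which expresses the $M$-derivative of a differentiable function as an ordinary derivative multiplied by $t^{1-\alpha}/\Gamma(\beta+1)$. First I would observe that, since $f$ and $g$ are differentiable on $[a,b]$ and $0<\alpha<1$, part (6) of Theorem~2.1 applies and gives, pointwise on $(a,b)$,
$$D^{\alpha,\beta}_{M}g(x)=\frac{x^{1-\alpha}}{\Gamma(\beta+1)}\,g'(x),\qquad D^{\alpha,\beta}_{M}f(x)=\frac{x^{1-\alpha}}{\Gamma(\beta+1)}\,f'(x).$$

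Next I would substitute the first of these into the left-hand side of the claim together with the definition $d_{\alpha}x=\frac{\Gamma(\beta+1)}{x^{1-\alpha}}dx$. The factors $x^{1-\alpha}$ and $\Gamma(\beta+1)$ cancel, so that
$$\int_{a}^{b}f(x)D^{\alpha,\beta}_{M}g(x)\,d_{\alpha}x=\int_{a}^{b}f(x)\,\frac{x^{1-\alpha}}{\Gamma(\beta+1)}\,g'(x)\,\frac{\Gamma(\beta+1)}{x^{1-\alpha}}\,dx=\int_{a}^{b}f(x)g'(x)\,dx.$$
Since $f$ and $g$ are differentiable on the compact interval $[a,b]$, the ordinary integration-by-parts formula is available and yields
$$\int_{a}^{b}f(x)g'(x)\,dx=f(x)g(x)\Bigr|_{a}^{b}-\int_{a}^{b}g(x)f'(x)\,dx.$$
Finally I would run the same cancellation in reverse on the remaining integral, using the second identity above: $g(x)f'(x)\,dx=g(x)D^{\alpha,\beta}_{M}f(x)\,d_{\alpha}x$, which turns the right-hand side into $f(x)g(x)\bigl|_{a}^{b}-\int_{a}^{b}g(x)D^{\alpha,\beta}_{M}f(x)\,d_{\alpha}x$, exactly the claimed formula.

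The argument is essentially bookkeeping, so I do not expect a genuine obstacle; the one place deserving a word of care is the very first step, namely checking that the hypothesis ``$f,g$ differentiable on $[a,b]$'' is precisely what licenses the reduction in part (6) of Theorem~2.1, and that the resulting ordinary integrands $f g'$ and $g f'$ are integrable on $[a,b]$ so that classical integration by parts legitimately applies. Everything else is direct cancellation of the $x^{1-\alpha}/\Gamma(\beta+1)$ weight against the $d_{\alpha}x$ measure.
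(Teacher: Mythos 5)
Your proof is correct. The paper itself offers no proof of this theorem --- it is imported from Sousa and Oliveira \cite{r15} as background material --- but your reduction to classical integration by parts via property (6) of Theorem 2.1, cancelling the weight $\frac{x^{1-\alpha}}{\Gamma(\beta+1)}$ against the measure $d_{\alpha}x=\frac{\Gamma(\beta+1)}{x^{1-\alpha}}dx$, is precisely the mechanism the paper uses in the proofs it does supply (e.g.\ Theorem 3.1 and Theorem 3.2). The only hypotheses worth stating explicitly are that $a>0$ (so that property (6) applies throughout $[a,b]$; the paper's Theorem 2.2 makes the same implicit assumption) and that $fg'$ and $gf'$ are integrable, e.g.\ $f,g\in C^{1}[a,b]$, so that the classical formula may be invoked --- a point you already flag.
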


\section{Existence and Uniqueness Theorem}
Let linear sequential \textit{M}-fractional differential equation of order $n\alpha$
\begin{equation}
^{n}D^{\alpha,\beta}_{M}y+p_{n-1}(t)^{n-1}D^{\alpha,\beta}_{M}y+...+p_{2}(t)^{2}D^{\alpha,\beta}_{M}y+p_{1}(t)D^{\alpha,\beta}_{M}y+p_{0}(t)y=0
\end{equation}
where $^{n}D^{\alpha,\beta}_{M}y=D^{\alpha,\beta}_{M}D^{\alpha,\beta}_{M}...D^{\alpha,\beta}_{M}y,$ ($n$ times)\\
Similarly, non-homogeneous fractional differential equation with \textit{M}-derivative is
\begin{equation}
^{n}D^{\alpha,\beta}_{M}y+p_{n-1}(t)^{n-1}D^{\alpha,\beta}_{M}y+...+p_{2}(t)^{2}D^{\alpha,\beta}_{M}y+p_{1}(t)D^{\alpha,\beta}_{M}y+p_{0}(t)y=f(t)
\end{equation}
We define an $n^{th}$-order differential operator for eqn. (1) as following
\begin{equation}
L_{\alpha,\beta}[y]=^{n}D^{\alpha,\beta}_{M}y+p_{n-1}(t)^{n-1}D^{\alpha,\beta}_{M}y+...+p_{2}(t)^{2}D^{\alpha,\beta}_{M}y+p_{1}(t)D^{\alpha,\beta}_{M}y+p_{0}(t)y=0
\end{equation}

\begin{mythm}
Let $\Gamma(\beta+1)t^{\alpha-1} p(t),\Gamma(\beta+1)t^{\alpha-1}f(t)\in C(a,b)$ and let $y$ be \textit{M}-differentiable for $0<\alpha\leq 1$ and $\beta>0$. Then the initial value problem
\begin{equation}
D^{\alpha,\beta}_{M}y+p(t)y=f(t)
\end{equation}
\begin{equation}
y(t_{0})=y_{0}
\end{equation}
has exactly one solution on the interval $(a,b)$ where $t_{0}\in(a,b)$    
\end{mythm}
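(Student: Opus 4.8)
The plan is to reduce the $M$-fractional initial value problem (4)--(5) to a classical first-order linear ordinary differential equation via the representation in Theorem 2.2(6). Since $y$ is (implicitly) differentiable, and $t>0$ throughout $(a,b)$ with $a\ge 0$, equation (4) is equivalent to $\tfrac{t^{1-\alpha}}{\Gamma(\beta+1)}y'(t)+p(t)y(t)=f(t)$, and multiplying by $\Gamma(\beta+1)t^{\alpha-1}$ gives $y'(t)+P(t)y(t)=F(t)$ with $P(t):=\Gamma(\beta+1)t^{\alpha-1}p(t)$ and $F(t):=\Gamma(\beta+1)t^{\alpha-1}f(t)$. By hypothesis $P,F\in C(a,b)$, so this is a linear scalar ODE with continuous coefficients on $(a,b)$, to which the classical existence--uniqueness theorem for linear ODEs applies immediately, giving a unique solution through $(t_0,y_0)$.

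To keep the argument inside the $M$-calculus framework, I would instead build an integrating factor $\mu(t):=\exp\bigl({}_MI^{\alpha,\beta}_{t_0}p(t)\bigr)$. Using Theorem 2.3 and Theorem 2.2(6) one verifies $D^{\alpha,\beta}_M\mu(t)=\mu(t)p(t)$, and then the product rule Theorem 2.2(2) yields
$$D^{\alpha,\beta}_M\bigl(\mu(t)y(t)\bigr)=\mu(t)\bigl(D^{\alpha,\beta}_My+p(t)y\bigr)=\mu(t)f(t).$$
Applying ${}_MI^{\alpha,\beta}_{t_0}$ together with the fundamental theorem (Theorem 2.3) and the initial condition (5) produces the explicit formula
$$y(t)=\frac{1}{\mu(t)}\Bigl[y_0+{}_MI^{\alpha,\beta}_{t_0}\bigl(\mu(\cdot)f(\cdot)\bigr)(t)\Bigr],$$
which is well defined on $(a,b)$ because $\mu$ is continuous and strictly positive and $\Gamma(\beta+1)t^{\alpha-1}\mu(t)f(t)$ is continuous there, so the $M$-integral in question exists. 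This establishes existence.

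For uniqueness, suppose $y_1,y_2$ both solve (4)--(5) and set $w:=y_1-y_2$, so that $D^{\alpha,\beta}_Mw+p(t)w=0$ and $w(t_0)=0$. The same computation gives $D^{\alpha,\beta}_M(\mu w)=0$ on $(a,b)$; by Theorem 2.2(6) this forces $(\mu w)'(t)=\Gamma(\beta+1)t^{\alpha-1}\cdot 0=0$, hence $\mu(t)w(t)$ is constant on $(a,b)$. Evaluating at $t_0$ gives $\mu(t_0)w(t_0)=0$, and since $\mu>0$ we conclude $w\equiv 0$, i.e. $y_1=y_2$.

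The only delicate points are: (i) the reduction needs $t>0$ on all of $(a,b)$ and differentiability of $y$, which is exactly why the statement restricts to $0<\alpha\le 1$, $\beta>0$ and assumes continuity of the weighted coefficients $\Gamma(\beta+1)t^{\alpha-1}p(t)$ and $\Gamma(\beta+1)t^{\alpha-1}f(t)$ rather than of $p,f$ alone; and (ii) checking $D^{\alpha,\beta}_M\mu=\mu p$, i.e. that the exponential composed with an $M$-integral differentiates as expected, which is where one leans on Theorems 2.3 and 2.2(6). Beyond that, the proof is the standard integrating-factor argument transported through the $M$-derivative dictionary, so the main obstacle is really just the bookkeeping of verifying that each use of the product rule and of the fundamental theorem of $M$-calculus meets its differentiability and continuity hypotheses.
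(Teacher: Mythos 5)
Your first paragraph is exactly the paper's proof: rewrite $D^{\alpha,\beta}_{M}y$ as $\frac{t^{1-\alpha}}{\Gamma(\beta+1)}y'$ via Theorem 2.2(6), multiply through by $\Gamma(\beta+1)t^{\alpha-1}$ to get $y'+P(t)y=F(t)$ with continuous coefficients, and invoke the classical existence--uniqueness theorem for first-order linear equations. The additional integrating-factor argument carried out inside the $M$-calculus is correct but goes beyond what the paper does; it is a self-contained alternative, not a needed repair.
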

\begin{proof}
Using property (6) in Theorem 2.1, we have
$$D^{\alpha,\beta}_{M}y+p(t)y=f(t)$$
$$\frac{t^{1-\alpha}}{\Gamma(\beta+1)} y'+p(t)y=f(t)$$
$$y'+\Gamma(\beta+1)t^{\alpha-1}p(t)y=\Gamma(\beta+1)t^{\alpha-1}f(t)$$
The proof is clear from classical linear fundamental theorem existence and uniqueness.
\end{proof}

\begin{mythm}
If  $\Gamma(\beta+1)t^{\alpha-1}p_{n-1}(t),...,\Gamma(\beta+1)t^{\alpha-1}p_{1}(t),\Gamma(\beta+1)t^{\alpha-1}p_{0}(t),\Gamma(\beta+1)t^{\alpha-1}f(t)\in C(a,b)$ and $y$ be $n$ times \textit{M}-differentiable function, then a solution $y(t)$ of the initial value problem
\begin{equation}
^{n}D^{\alpha,\beta}_{M}y+p_{n-1}(t)^{n-1}D^{\alpha,\beta}_{M}y+...+p_{2}(t)^{2}D^{\alpha,\beta}_{M}y+p_{1}(t)D^{\alpha,\beta}_{M}y+p_{0}(t)y=f(t)
\end{equation}
\begin{equation}
y(t_{0})=y_{0},D^{\alpha,\beta}_{M}y(t_{0})=y_{1},...,^{n-1}D^{\alpha,\beta}_{M}y(t_{0})=y_{n-1}, a<t_{0}<b
\end{equation}
\end{mythm}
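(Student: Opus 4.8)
Since the statement as displayed terminates without a conclusion, I take the intended assertion to be, in parallel with Theorem 3.1, that this initial value problem has exactly one solution on $(a,b)$. The plan is to reduce the $n\alpha$-order sequential $M$-fractional initial value problem to an equivalent first-order linear \emph{system} and then quote the classical existence and uniqueness theorem for linear systems with continuous coefficients, exactly in the spirit of the scalar argument in Theorem 3.1.

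First I would introduce the auxiliary unknowns
$$u_{1}=y,\qquad u_{2}=D^{\alpha,\beta}_{M}y,\qquad \dots,\qquad u_{n}={}^{n-1}D^{\alpha,\beta}_{M}y .$$
By the definition of the iterated operator, $D^{\alpha,\beta}_{M}u_{k}=u_{k+1}$ for $k=1,\dots ,n-1$, and the differential equation in the hypothesis rewrites as
$$D^{\alpha,\beta}_{M}u_{n}={}^{n}D^{\alpha,\beta}_{M}y=f(t)-p_{n-1}(t)u_{n}-\cdots -p_{1}(t)u_{2}-p_{0}(t)u_{1}.$$
Hence, writing $\mathbf{u}=(u_{1},\dots ,u_{n})^{T}$, the scalar problem is equivalent to the first-order $M$-fractional system $D^{\alpha,\beta}_{M}\mathbf{u}=A(t)\mathbf{u}+\mathbf{g}(t)$, where $A(t)$ is the companion-type matrix with $1$'s on the superdiagonal and last row $(-p_{0}(t),\dots ,-p_{n-1}(t))$, and $\mathbf{g}(t)=(0,\dots ,0,f(t))^{T}$; the initial conditions in the hypothesis become $\mathbf{u}(t_{0})=(y_{0},y_{1},\dots ,y_{n-1})^{T}$.

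Next, since $t>0$ throughout $(a,b)$, property (6) in Theorem 2.1 applied componentwise gives $D^{\alpha,\beta}_{M}\mathbf{u}=\frac{t^{1-\alpha}}{\Gamma(\beta+1)}\mathbf{u}'$, so multiplying the system by $\Gamma(\beta+1)t^{\alpha-1}$ converts it into the ordinary linear system $\mathbf{u}'=\Gamma(\beta+1)t^{\alpha-1}A(t)\,\mathbf{u}+\Gamma(\beta+1)t^{\alpha-1}\mathbf{g}(t)$ on $(a,b)$. By hypothesis every entry of its coefficient matrix and forcing term — namely the functions $\Gamma(\beta+1)t^{\alpha-1}p_{j}(t)$ and $\Gamma(\beta+1)t^{\alpha-1}f(t)$ — belongs to $C(a,b)$. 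Since $t_{0}\in(a,b)$, the classical fundamental existence and uniqueness theorem for linear systems with continuous coefficients yields a unique solution $\mathbf{u}$ on the whole interval $(a,b)$.

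Finally I would unwind the substitution. Set $y:=u_{1}$. Each component $u_{k}$ is ordinarily differentiable on $(a,b)$ with $u_{k}'$ equal to $\Gamma(\beta+1)^{-1}t^{\alpha-1}$ times a continuous function, so reading property (6) in Theorem 2.1 backwards shows that $y$ is $n$ times $M$-differentiable on $(a,b)$, that ${}^{k}D^{\alpha,\beta}_{M}y=u_{k+1}$, and therefore that $y$ solves the given equation and matches the prescribed initial data. Conversely any solution of the original problem produces, via the same substitution, a solution of the linear system with the same initial vector, so uniqueness for the system transfers back. The step I expect to be the main obstacle is precisely this two-way passage between ``$n$ times $M$-differentiable'' and ``$n$ times ordinarily differentiable'': one has to check that property (6) in Theorem 2.1 is legitimately applicable at each stage of the iteration (which needs $t>0$, hence some care if an endpoint is $a=0$, a situation here absorbed into the assumption that the displayed products are continuous on the \emph{open} interval $(a,b)$) and that no regularity is lost in either direction.
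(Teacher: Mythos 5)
Your proposal is correct and follows essentially the same route as the paper: reduction to a first-order companion system for $\bigl(y, D^{\alpha,\beta}_{M}y,\dots,{}^{n-1}D^{\alpha,\beta}_{M}y\bigr)$, conversion to an ordinary linear system via property (6) of Theorem 2.1, and an appeal to the classical existence--uniqueness theorem for linear systems with continuous coefficients. Your added care about the two-way equivalence between $M$-differentiability and ordinary differentiability is a point the paper passes over silently, but it does not change the argument.
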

\begin{proof}
The existence of a local solution is obtained by transform our problem into the first order system of differential equations. So, we introduce new variables
$$x_{1}=y,x_{2}=D^{\alpha,\beta}_{M}y,x_{3}=^{2}D^{\alpha,\beta}_{M}y,,...,x_{n}=^{n-1}D^{\alpha,\beta}_{M}y$$
In this, we have
$$D^{\alpha,\beta}_{M}x_{1}=x_{2}$$
$$D^{\alpha,\beta}_{M}x_{2}=x_{3}$$
$$\vdots$$
$$D^{\alpha,\beta}_{M}x_{n-1}=x_{n}$$
$$D^{\alpha,\beta}_{M}x_{n}=-p_{n-1}x_{n}-...-p_{2} x_{3}-p_{1} x_{2}-p_{0} x_{1}+f(t)$$
The above equations can be written as the following
$$D^{\alpha,\beta}_{M}\underbrace{\begin{bmatrix}
x_{1}\\x_{2}\\\vdots \\x_{n-1}\\x_{n}
\end{bmatrix}}_{X(t)}+\underbrace{\begin{bmatrix}
0&-1&0&0&\cdots&0\\0&0&-1&0&\cdots&0\\\vdots&\vdots&\vdots&\vdots&\cdots&\vdots\\0&0&0&0&\cdots&-1\\p_{0}&p_{1}&p_{2}&p_{3}&\cdots&p_{n-1}\end{bmatrix}}_{P(t)}
\begin{bmatrix} x_{1}\\x_{2}\\\vdots \\x_{n-1}\\x_{n} \end{bmatrix}=\underbrace{\begin{bmatrix} 0\\0\\\vdots \\0\\f(t) \end{bmatrix}}_{F(t)}$$
$$D^{\alpha,\beta}_{M}X(t)+P(t)X(t)=F(t)$$
$$X'(t)+\Gamma(\beta+1)t^{\alpha-1}P(t)X(t)=\Gamma(\beta+1)t^{\alpha-1}F(t)$$
The existence and uniqueness of solution (6)-(7) follows from classical theorems on existence and uniqueness for system equation.
\end{proof}
\begin{mythm}
If $y_{1}$ and $y_{2}$ are $n$ times \textit{M}-differentiable functions and $c_{1},c_{2}$ are arbitrary numbers, then $L_{\alpha,\beta}$ is linear.
$$i.e,\	 L_{\alpha,\beta}[c_{1} y_{1}+c_{2} y_{2}]=c_{1} L_{\alpha,\beta}[y_{1} ]+c_{2}L_{\alpha,\beta}[y_{2}]$$
\end{mythm}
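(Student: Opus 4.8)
The plan is to reduce the whole statement to the single-order linearity property recorded in Theorem 2.1(1), namely $D^{\alpha,\beta}_{M}(af+bg)(t)=aD^{\alpha,\beta}_{M}f(t)+bD^{\alpha,\beta}_{M}g(t)$. First I would establish, by induction on $k$, that each iterated operator ${}^{k}D^{\alpha,\beta}_{M}$ is itself linear, i.e.
$$^{k}D^{\alpha,\beta}_{M}(c_{1}y_{1}+c_{2}y_{2})=c_{1}\,{}^{k}D^{\alpha,\beta}_{M}y_{1}+c_{2}\,{}^{k}D^{\alpha,\beta}_{M}y_{2}$$
for all $n$-times $M$-differentiable $y_{1},y_{2}$ and all scalars $c_{1},c_{2}$, where $0\leq k\leq n$. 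The base case $k=1$ is exactly Theorem 2.1(1); for the inductive step I would write ${}^{k+1}D^{\alpha,\beta}_{M}=D^{\alpha,\beta}_{M}\bigl({}^{k}D^{\alpha,\beta}_{M}\bigr)$, apply the induction hypothesis to the inner operator, and then apply Theorem 2.1(1) once more to the outer $D^{\alpha,\beta}_{M}$.

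Next I would note that multiplication by a fixed coefficient function $p_{k}(t)$ is a linear operation, $p_{k}(t)(c_{1}u_{1}+c_{2}u_{2})=c_{1}p_{k}(t)u_{1}+c_{2}p_{k}(t)u_{2}$, so combining this with the previous step shows that each summand $p_{k}(t)\,{}^{k}D^{\alpha,\beta}_{M}y$ appearing in $L_{\alpha,\beta}[y]$ depends linearly on $y$ (with the usual conventions ${}^{0}D^{\alpha,\beta}_{M}y=y$ and leading coefficient $1$ on the top-order term). Since a finite sum of linear operators is again linear, adding the $n+1$ terms of $L_{\alpha,\beta}$ yields
$$L_{\alpha,\beta}[c_{1}y_{1}+c_{2}y_{2}]=c_{1}L_{\alpha,\beta}[y_{1}]+c_{2}L_{\alpha,\beta}[y_{2}],$$
which is the assertion.

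There is no genuine obstacle here; the only point requiring a little care is making the induction on the order of the iterated derivative explicit, and keeping track of the standing hypothesis that $y_{1}$ and $y_{2}$ are $n$-times $M$-differentiable, so that every ${}^{k}D^{\alpha,\beta}_{M}y_{i}$ used in the argument is well defined for $0\leq k\leq n$. Once that bookkeeping is in place the conclusion is immediate from Theorem 2.1(1).
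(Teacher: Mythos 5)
Your proof is correct and is essentially the argument the paper intends: the paper itself gives no written proof here, deferring to the analogous Theorem~4.3 of \cite{r5}, and that standard argument is precisely your induction on the order of the iterated derivative using Theorem~2.1(1), followed by linearity of multiplication by the coefficients and of finite sums. Your version actually spells out the bookkeeping the paper omits, so nothing is missing.
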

\begin{proof}
We can easily derived the proof of this theorem by applying same procedure  in Theorem-4.3 \cite{r5} to \textit{M}- derivative.
\end{proof}

\begin{mythm}
If $y_{1},y_{2}...y_{n}$ are the solutions of equation $L_{\alpha,\beta}[y]=0$ and $c_{1},c_{2}...c_{n}$ are arbitrary constants, then the linear combination $y(t)=c_{1} y_{1}+c_{2} y_{2}+...+c_{n}y_{n}$ is also solution of $L_{\alpha,\beta}[y]=0$.
\end{mythm}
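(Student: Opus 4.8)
The plan is to reduce the assertion entirely to the linearity of the operator $L_{\alpha,\beta}$ already established in Theorem 3.4, and then to pass from the two-term case to the $n$-term case by a short induction on $n$.

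First I would record the base of the induction: for $n=1$ there is nothing to prove, while for $n=2$ the claim is precisely the content of Theorem 3.4 combined with the hypotheses $L_{\alpha,\beta}[y_{1}]=L_{\alpha,\beta}[y_{2}]=0$, since then $L_{\alpha,\beta}[c_{1}y_{1}+c_{2}y_{2}]=c_{1}L_{\alpha,\beta}[y_{1}]+c_{2}L_{\alpha,\beta}[y_{2}]=c_{1}\cdot 0+c_{2}\cdot 0=0$. For the inductive step, assume the statement holds for any $k$ solutions, and let $y_{1},\dots,y_{k+1}$ be solutions with arbitrary constants $c_{1},\dots,c_{k+1}$. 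Write $y=z+c_{k+1}y_{k+1}$ where $z:=c_{1}y_{1}+\dots+c_{k}y_{k}$. By the induction hypothesis $z$ is a solution of $L_{\alpha,\beta}[y]=0$, and $z$ is itself $n$ times $M$-differentiable since it is a finite linear combination of $n$ times $M$-differentiable functions, using part (1) of Theorem 2.1 applied iteratively. Then Theorem 3.4, applied to the pair $z$ and $y_{k+1}$, gives $L_{\alpha,\beta}[y]=L_{\alpha,\beta}[z]+c_{k+1}L_{\alpha,\beta}[y_{k+1}]=0+0=0$, which closes the induction.

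Since superposition is a purely algebraic consequence of linearity, there is no analytic obstacle; the only point needing a moment's attention is checking that every intermediate linear combination stays in the domain of $L_{\alpha,\beta}$, i.e.\ remains $n$ times $M$-differentiable, and this is immediate from the additivity and homogeneity of $D^{\alpha,\beta}_{M}$ in part (1) of Theorem 2.1. Alternatively, one may bypass the induction altogether: apply part (1) of Theorem 2.1 directly to each of the terms $^{j}D^{\alpha,\beta}_{M}$, $j=0,1,\dots,n$, occurring in the definition of $L_{\alpha,\beta}$, distribute over the sum $c_{1}y_{1}+\dots+c_{n}y_{n}$ in a single step, regroup the result as $c_{1}L_{\alpha,\beta}[y_{1}]+\dots+c_{n}L_{\alpha,\beta}[y_{n}]$, and use $L_{\alpha,\beta}[y_{i}]=0$ for each $i$ to conclude $L_{\alpha,\beta}[y]=0$.
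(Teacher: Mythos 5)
Your proof is correct and is essentially the argument the paper intends: the paper itself gives no details here, deferring to the analogous Theorem~4.4 of \cite{r5}, and your induction on the number of terms via the two-function linearity statement (together with the direct one-step regrouping you sketch as an alternative) is exactly that standard superposition argument, including the worthwhile observation that each intermediate combination remains $n$ times \textit{M}-differentiable by part (1) of Theorem~2.1. The only correction needed is a citation label: the linearity of $L_{\alpha,\beta}$ that you invoke is Theorem~3.3 of this paper, not Theorem~3.4 (which is the statement being proved).
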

\begin{proof}
We can easily derived the proof of this theorem by applying same procedure in Theorem-4.4\cite{r5} to \textit{M}- derivative.
\end{proof}

\begin{mydef}
For $n$ functions $y_{1},y_{2}...y_{n}$, we define the \textit{M}-Wronskain of these function to be the determinant
$$W_{\alpha,\beta}(t)=\begin{vmatrix}
y_1&y_{2}&\cdots&y_{n}\\
D^{\alpha,\beta}_{M}y_{1}&D^{\alpha,\beta}_{M}y_{2}&\cdots&D^{\alpha,\beta}_{M}y_{n}\\
\vdots&\vdots&\cdots&\vdots\\
^{n-1}D^{\alpha,\beta}_{M}y_{1}&^{n-1}D^{\alpha,\beta}_{M}y_{2}&\cdots&^{n-1}D^{\alpha,\beta}_{M}y_{n}
\end{vmatrix}$$
\end{mydef}

\begin{mythm}
Let $y_{1},y_{2}...y_{n}$ be $n$ solutions of $L_{\alpha,\beta}[y]=0$. If there is a $t_{0}\in(a,b)$ such that $W_{\alpha,\beta}(t_{0})\neq0$, then ${y_{1},y_{2}...y_{n}}$ is a fundamental set of solutions.
\end{mythm}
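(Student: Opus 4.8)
The plan is to verify the two properties that make $\{y_{1},y_{2},\dots,y_{n}\}$ a fundamental set: that these $n$ solutions are linearly independent on $(a,b)$, and that every solution of $L_{\alpha,\beta}[y]=0$ on $(a,b)$ is a linear combination of them. Both will be deduced from the single hypothesis $W_{\alpha,\beta}(t_{0})\neq 0$, together with the linearity of $L_{\alpha,\beta}$ (Theorem 3.3), the superposition principle (Theorem 3.4), and the existence--uniqueness theorem for the $n\alpha$-order initial value problem (Theorem 3.2). The key observation is that the matrix whose determinant defines $W_{\alpha,\beta}(t_{0})$ is invertible precisely when $W_{\alpha,\beta}(t_{0})\neq 0$, and this invertibility is what drives everything.

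First I would prove linear independence. Suppose $c_{1}y_{1}+c_{2}y_{2}+\dots+c_{n}y_{n}=0$ identically on $(a,b)$. Applying the operator $D^{\alpha,\beta}_{M}$ to this identity $k$ times and using its linearity (from Theorem 2.1, part (1)) gives $c_{1}\,{}^{k}D^{\alpha,\beta}_{M}y_{1}+\dots+c_{n}\,{}^{k}D^{\alpha,\beta}_{M}y_{n}=0$ for $k=0,1,\dots,n-1$. Evaluating these $n$ identities at $t_{0}$ yields a homogeneous linear system for the vector $(c_{1},\dots,c_{n})$ whose coefficient matrix is exactly the matrix appearing in Definition 3.5 evaluated at $t_{0}$. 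Since its determinant $W_{\alpha,\beta}(t_{0})$ is nonzero, the matrix is invertible and we conclude $c_{1}=c_{2}=\dots=c_{n}=0$.

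Next, to show that the family spans the solution space, let $y$ be an arbitrary solution of $L_{\alpha,\beta}[y]=0$ on $(a,b)$, and put $\eta_{0}=y(t_{0})$, $\eta_{1}=D^{\alpha,\beta}_{M}y(t_{0})$, $\dots$, $\eta_{n-1}={}^{n-1}D^{\alpha,\beta}_{M}y(t_{0})$. I would look for constants $c_{1},\dots,c_{n}$ so that $\varphi:=c_{1}y_{1}+\dots+c_{n}y_{n}$ has the same data at $t_{0}$, i.e.\ ${}^{k}D^{\alpha,\beta}_{M}\varphi(t_{0})=\eta_{k}$ for $k=0,\dots,n-1$. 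Writing this out, it is the linear system with coefficient matrix the Wronskian matrix at $t_{0}$ and right-hand side $(\eta_{0},\dots,\eta_{n-1})^{T}$; because $W_{\alpha,\beta}(t_{0})\neq 0$ it has a (unique) solution $(c_{1},\dots,c_{n})$. By Theorem 3.4, $\varphi$ is itself a solution of $L_{\alpha,\beta}[y]=0$, and by construction $\varphi$ and $y$ are two solutions of the initial value problem (6)--(7) with $f\equiv 0$ sharing the same initial data at $t_{0}$. Hence Theorem 3.2 forces $y\equiv\varphi$ on $(a,b)$, so $y=c_{1}y_{1}+\dots+c_{n}y_{n}$ as required.

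I expect the only genuinely delicate point to be invoking the \emph{uniqueness} half of Theorem 3.2: as stated it advertises only the existence of a solution, but its proof reduces (6)--(7) to the first-order system $X'(t)+\Gamma(\beta+1)t^{\alpha-1}P(t)X(t)=\Gamma(\beta+1)t^{\alpha-1}F(t)$ and appeals to the classical existence-and-uniqueness theorem for systems, which does deliver uniqueness under the stated continuity hypotheses; I would simply point to that. Apart from this, the argument is pure linear algebra (invertibility of the Wronskian matrix at $t_{0}$) combined with results already established, so no further obstacle should arise.
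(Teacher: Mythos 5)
Your proof takes essentially the same route as the paper: both determine the constants $c_{1},\dots,c_{n}$ by solving the linear system at $t_{0}$ whose coefficient matrix is the Wronskian matrix, invertible precisely because $W_{\alpha,\beta}(t_{0})\neq 0$. You are in fact more complete than the paper, whose proof stops after producing the $c_{i}$ by Cramer's rule and never invokes the uniqueness part of Theorem 3.2 to conclude that $y\equiv c_{1}y_{1}+\dots+c_{n}y_{n}$ on all of $(a,b)$ --- a step you correctly supply, together with an explicit linear-independence check that the paper omits.
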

\begin{proof}
We need to show that if $y(t)$ is a solution of $L_{\alpha,\beta}[y]=0$, then we can write $y(t)$ as a linear combination of $y_{1},y_{2}...y_{n}$. 
$$i.e, y=c_{1}y_{1}+c_{2}y_{2}+...+c_{n}y_{n}$$
so the problem reduces to finding the constants $c_{1},c_{2},...c_{n}$.These constants are found by solving the following linear system of $n$ equations
$$c_{1} y_{1}(t_{0})+c_{2}y_{2}(t_{0})+...+c_{n}y_{n}(t_{0})=y(t_{0})$$
$$c_{1}D^{\alpha,\beta}_{M} y_{1}(t_{0})+c_{2}D^{\alpha,\beta}_{M}y_{2}(t_{0})+...+c_{n}D^{\alpha,\beta}_{M}y_{n}(t_{0})=D^{\alpha,\beta}_{M}y(t_{0})$$
$$\vdots\\$$
$$c_{1}^{n-1}D^{\alpha,\beta}_{M} y_{1}(t_{0})+c_{2}^{n-1}D^{\alpha,\beta}_{M}y_{2}(t_{0})+...+c_{n}^{n-1}D^{\alpha,\beta}_{M}y_{n}(t_{0})=^{n-1}D^{\alpha,\beta}_{M}y(t_{0})$$
Using Cramer’s rule, we can find
$$c_{i}=\frac{W_{\alpha,\beta}^{i}(t_{0})}{W_{\alpha,\beta}(t_{0})}, 1\leq i \leq n $$ 
Since $W_{\alpha,\beta}(t_{0})\neq 0$, it follows that $c_{1},c_{2},...c_{n}$ exist.  
\end{proof}

\begin{mythm}
Let $y_{1},y_{2}...y_{n}$ be $n$ solutions of $L_{\alpha,\beta}[y]=0$. Then
\begin{enumerate}
\item $W_{\alpha,\beta}(t)$ satisfies the differential equation$D^{\alpha,\beta}_{M}W_{\alpha,\beta}+p_(n-1) W_{\alpha,\beta}=0$
\item If $t_{0}$ is any point in $(a,b)$, then 
$$W_{\alpha,\beta}(t)=W_{\alpha,\beta}(t_{0})e^{-\Gamma(\beta+1)\int_{t_{0}}^{t}x^{\alpha-1}p_{n-1}(x)dx}$$
Further, if $W_{\alpha,\beta}(t_{0})\neq0$ then $W_{\alpha,\beta}(t)\neq0$ for all $t\in(a,b)$
\end{enumerate} 
\end{mythm}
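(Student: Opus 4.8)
The plan is to prove (1) by an Abel-type computation — differentiating the $M$-Wronskian determinant one row at a time — and then to read off (2) by solving the resulting scalar first-order linear $M$-fractional equation. First I would record the differentiation rule for a determinant under $D^{\alpha,\beta}_{M}$. By property (6) of Theorem 2.1, $D^{\alpha,\beta}_{M}g(t)=\frac{t^{1-\alpha}}{\Gamma(\beta+1)}g'(t)$, so the $M$-derivative is just the classical derivative rescaled by the fixed scalar $\frac{t^{1-\alpha}}{\Gamma(\beta+1)}$; hence it is linear and obeys the Leibniz rule (Theorem 2.1(1)--(2)), and therefore $D^{\alpha,\beta}_{M}$ of an $n\times n$ determinant equals the sum of the $n$ determinants obtained by replacing exactly one row by its $M$-derivative. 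Applied to $W_{\alpha,\beta}(t)$, the $k$-th of these summands (for $1\le k\le n-1$) has its $k$-th row equal to $\big({}^{k}D^{\alpha,\beta}_{M}y_{1},\ldots,{}^{k}D^{\alpha,\beta}_{M}y_{n}\big)$, which duplicates the $(k+1)$-st row of the matrix and so contributes $0$. Only the last summand survives, the one whose bottom row is $\big({}^{n}D^{\alpha,\beta}_{M}y_{1},\ldots,{}^{n}D^{\alpha,\beta}_{M}y_{n}\big)$.

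Next, since every $y_{i}$ satisfies $L_{\alpha,\beta}[y]=0$, I would substitute
$${}^{n}D^{\alpha,\beta}_{M}y_{i}=-p_{n-1}\,{}^{n-1}D^{\alpha,\beta}_{M}y_{i}-p_{n-2}\,{}^{n-2}D^{\alpha,\beta}_{M}y_{i}-\cdots-p_{1}D^{\alpha,\beta}_{M}y_{i}-p_{0}y_{i}$$
into that bottom row and use multilinearity of the determinant in its last row. This splits it into $n$ determinants; in the one coming from the term $-p_{j}$ the bottom row is $-p_{j}$ times the $(j+1)$-st row of $W_{\alpha,\beta}$ for each $j=0,\ldots,n-2$, so all of those vanish, and what is left is the single determinant with bottom row $-p_{n-1}\big({}^{n-1}D^{\alpha,\beta}_{M}y_{1},\ldots,{}^{n-1}D^{\alpha,\beta}_{M}y_{n}\big)$, namely $-p_{n-1}(t)\,W_{\alpha,\beta}(t)$. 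Hence $D^{\alpha,\beta}_{M}W_{\alpha,\beta}+p_{n-1}W_{\alpha,\beta}=0$, which is assertion (1).

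For (2), I would rewrite (1) in classical form via property (6) once more: $\frac{t^{1-\alpha}}{\Gamma(\beta+1)}W_{\alpha,\beta}'(t)+p_{n-1}(t)W_{\alpha,\beta}(t)=0$, that is, $W_{\alpha,\beta}'(t)+\Gamma(\beta+1)t^{\alpha-1}p_{n-1}(t)W_{\alpha,\beta}(t)=0$. This is a scalar first-order linear ODE whose coefficient $\Gamma(\beta+1)t^{\alpha-1}p_{n-1}(t)$ is continuous on $(a,b)$ by hypothesis, so, either by separating variables and integrating from $t_{0}$ to $t$ or by invoking Theorem 3.1 for the homogeneous first-order equation, its solution is $W_{\alpha,\beta}(t)=W_{\alpha,\beta}(t_{0})\,e^{-\Gamma(\beta+1)\int_{t_{0}}^{t}x^{\alpha-1}p_{n-1}(x)\,dx}$. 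Because the exponential factor is never zero, $W_{\alpha,\beta}(t_{0})\neq0$ immediately forces $W_{\alpha,\beta}(t)\neq0$ for all $t\in(a,b)$. The only point requiring care is the row-by-row differentiation of the determinant and the bookkeeping of which minors die after the ODE substitution; since both rest only on the Leibniz and linearity rules of Theorem 2.1 together with the vanishing of a determinant having two equal rows, I do not anticipate a genuine obstacle.
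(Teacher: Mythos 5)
Your proof is correct, but it follows a different route from the paper's. The paper first rewrites the scalar equation as a first-order system $D^{\alpha,\beta}_{M}X = P(t)X$ with companion matrix $P$, and then simply quotes the Liouville-type trace identity $D^{\alpha,\beta}_{M}W_{\alpha,\beta} = (a_{11}+\cdots+a_{nn})W_{\alpha,\beta}$, observing that the trace equals $-p_{n-1}$; the identity itself is asserted without proof. You instead carry out the classical Abel computation directly on the determinant: since $D^{\alpha,\beta}_{M}g = \frac{t^{1-\alpha}}{\Gamma(\beta+1)}g'$ is a scalar rescaling of the ordinary derivative, it differentiates a determinant row by row; the first $n-1$ summands die because a differentiated row duplicates the next one, and after substituting ${}^{n}D^{\alpha,\beta}_{M}y_{i}$ from the equation into the bottom row, multilinearity kills every term except $-p_{n-1}W_{\alpha,\beta}$. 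Your argument is therefore more self-contained -- it effectively supplies the proof of the trace formula that the paper takes for granted (at least in the companion-matrix case needed here) -- at the cost of some determinant bookkeeping, while the paper's version is shorter but leans on an uncited general fact about $M$-derivatives of Wronskians of systems. Part (2) is handled identically in both: convert to the classical first-order linear ODE via property (6) of Theorem 2.1 and integrate, with the nonvanishing of the exponential giving the final claim.
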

\begin{proof}
(1)Let us introduce new variables
$$x_{1}=y,x_{2}=D^{\alpha,\beta}_{M}y,x_{3}=^{2}D^{\alpha,\beta}_{M}y,...,x_{n}=^{n-1}D^{\alpha,\beta}_{M}y$$
From this, we have
$$D^{\alpha,\beta}_{M}x_{1}=x_{2}$$
$$D^{\alpha,\beta}_{M}x_{2}=x_{3}$$
$$\vdots$$
$$D^{\alpha,\beta}_{M}x_{n-1}=x_{n}$$
$$D^{\alpha,\beta}_{M}x_{n}=-p_{n-1}x_{n}-...-p_{2} x_{3}-p_{1} x_{2}-p_{0} x_{1}$$
$$D^{\alpha,\beta}_{M}\underbrace{\begin{bmatrix}
x_{1}\\x_{2}\\\vdots \\x_{n-1}\\x_{n}
\end{bmatrix}}_{X(t)}=\underbrace{\begin{bmatrix}
0&-1&0&0&\cdots&0\\0&0&-1&0&\cdots&0\\\vdots&\vdots&\vdots&\vdots&\cdots&\vdots\\0&0&0&0&\cdots&-1\\-p_{0}&-p_{1}&-p_{2}&-p_{3}&\cdots&-p_{n-1}\end{bmatrix}}_{P(t)}
\begin{bmatrix} x_{1}\\x_{2}\\\vdots \\x_{n-1}\\x_{n} \end{bmatrix}=\underbrace{\begin{bmatrix} 0\\0\\\vdots \\0\\f(t) \end{bmatrix}}_{F(t)}$$
$$D^{\alpha,\beta}_{M}X(t)=P(t)X(t)$$
We have
$$D^{\alpha,\beta}_{M}W_{\alpha,\beta}(t)=(a_{11}+a_{22}+...+a_{nn})W_{\alpha,\beta}(t)$$
In our case
$$a_{11}+a_{22}+...+a_{nn}=-p_{n-1}(t)$$
So, $$D^{\alpha,\beta}_{M}W_{\alpha,\beta}(t)+p_{n-1}(t)W_{\alpha,\beta}(t)=0$$
(2)The above differential equation can be solved by the method of integrating factor, we have 
	$$W_{\alpha,\beta}(t)=W_{\alpha,\beta}(t_{0})e^{-\Gamma(\beta+1)\int_{t_{0}}^{t}x^{\alpha-1}p_{n-1}(x)dx}$$
Thus the proof of theorem is completed.
\end{proof}

\begin{mythm}
If  $\{y_{1},y_{2}...y_{n}\}$ is a fundamental set of solutions of $L_{\alpha,\beta}[y]=0$ where $\Gamma(\beta+1)t^{\alpha-1}p_{n-1}(t)...\Gamma(\beta+1)t^{\alpha-1}p_{1}(t),\Gamma(\beta+1)t^{\alpha-1}p_{0}(t)\in C(a,b)$,then $W_{\alpha,\beta}(t)\neq 0$ for all $t\in (a,b)$.
\end{mythm}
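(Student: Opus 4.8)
The plan is to argue by contradiction, using the uniqueness half of Theorem 3.2 together with the fact that a fundamental set is, in particular, linearly independent. Suppose, contrary to the claim, that $W_{\alpha,\beta}(t_{0})=0$ for some $t_{0}\in(a,b)$. The quantity $W_{\alpha,\beta}(t_{0})$ is precisely the determinant of the coefficient matrix of the homogeneous linear algebraic system
$$\sum_{j=1}^{n}c_{j}\,{}^{k}D^{\alpha,\beta}_{M}y_{j}(t_{0})=0,\qquad k=0,1,\dots,n-1,$$
so a vanishing determinant yields scalars $c_{1},\dots,c_{n}$, not all zero, solving this system.

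Next I would set $\phi(t)=c_{1}y_{1}(t)+c_{2}y_{2}(t)+\dots+c_{n}y_{n}(t)$. By Theorem 3.4 the function $\phi$ is again a solution of $L_{\alpha,\beta}[y]=0$, and by the choice of the $c_{j}$ it satisfies the homogeneous initial data $\phi(t_{0})=D^{\alpha,\beta}_{M}\phi(t_{0})=\dots={}^{n-1}D^{\alpha,\beta}_{M}\phi(t_{0})=0$. The identically zero function solves the very same initial value problem, and the continuity hypotheses imposed in the present statement on $\Gamma(\beta+1)t^{\alpha-1}p_{i}(t)$ are exactly those required to invoke Theorem 3.2; hence uniqueness forces $\phi\equiv 0$ on $(a,b)$. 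That is, $c_{1}y_{1}+\dots+c_{n}y_{n}\equiv 0$ with the $c_{j}$ not all zero, which contradicts the linear independence of the fundamental set $\{y_{1},\dots,y_{n}\}$. Therefore no such $t_{0}$ exists, and $W_{\alpha,\beta}(t)\neq 0$ for every $t\in(a,b)$.

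An equivalent and perhaps cleaner packaging is to produce just one point at which $W_{\alpha,\beta}$ is nonzero and then propagate: part (2) of Theorem 3.7 gives $W_{\alpha,\beta}(t)=W_{\alpha,\beta}(t_{0})e^{-\Gamma(\beta+1)\int_{t_{0}}^{t}x^{\alpha-1}p_{n-1}(x)\,dx}$, and since the exponential never vanishes, $W_{\alpha,\beta}(t_{0})\neq 0$ for one $t_{0}$ already gives $W_{\alpha,\beta}(t)\neq 0$ throughout $(a,b)$; the existence of such a point is guaranteed by the contradiction argument above.

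I do not anticipate a genuine obstacle: the result is essentially the converse of Theorem 3.6 fused with Abel's identity, and every ingredient needed — linearity of $L_{\alpha,\beta}$, closure of the solution set under linear combinations, uniqueness for the $n\alpha$-order initial value problem, and the Abel-type formula — has already been established in the excerpt. The only point demanding a little care is the logical bookkeeping, namely making explicit that a \emph{fundamental set} is linearly independent so that the derived relation $\sum c_{j}y_{j}\equiv 0$ truly contradicts the hypothesis, and checking that the stated continuity assumptions line up with those of Theorem 3.2.
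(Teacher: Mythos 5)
Your argument is correct and is essentially the intended one: the paper itself offers no proof of this theorem beyond deferring to Theorem~4.8 of the cited conformable-derivative paper, and the standard proof there is exactly your contradiction argument (vanishing Wronskian at $t_{0}$ gives a nontrivial null solution of the zero-data IVP, uniqueness from Theorem~3.2 forces it to vanish identically, contradicting linear independence), so you have in effect supplied the details the paper omits. The only blemishes are bookkeeping: the Abel identity you invoke is the paper's Theorem~3.6, not 3.7 (3.7 is the statement being proved), and the fact that a fundamental set is linearly independent should be pinned to the paper's subsequent equivalence theorem or to an explicit definition, a point you already flag yourself.
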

\begin{proof}
By applying procedure in Theorem-4.8 [5] to \textit{M}-derivative, we can easily prove this theorem.
\end{proof}
\begin{mythm}
Let $\Gamma(\beta+1)t^{\alpha-1}p_{n-1}(t),...,\Gamma(\beta+1)t^{\alpha-1}p_{1}(t), \Gamma(\beta+1)t^{\alpha-1}p_{0}(t)\in C(a,b)$. The solution set $\{y_{1},y_{2},...,y_{n}\}$ is a fundamental set of solutions to the equation $L_{\alpha,\beta}[y]=0$ if and only if the functions $y_{1},y_{2},...,y_{n}$ are linearly independent.
\end{mythm}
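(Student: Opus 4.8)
The plan is to prove the two implications separately, with the Abel-type identity for the $M$-Wronskian (Theorem 3.6(2)) and the uniqueness half of the existence--uniqueness theorem (Theorem 3.2) doing the real work; the continuity hypotheses on $\Gamma(\beta+1)t^{\alpha-1}p_j(t)$ are exactly what makes Theorem 3.2 applicable on all of $(a,b)$.

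\emph{Fundamental set $\Rightarrow$ linearly independent.} Here I would argue by contradiction. If $y_1,\dots,y_n$ were linearly dependent there are constants $c_1,\dots,c_n$, not all zero, with $c_1y_1+\dots+c_ny_n\equiv 0$ on $(a,b)$. Applying $D^{\alpha,\beta}_M$ repeatedly and using its linearity (Theorem 2.1(1)) yields $c_1\,{}^{k}D^{\alpha,\beta}_M y_1+\dots+c_n\,{}^{k}D^{\alpha,\beta}_M y_n\equiv 0$ for $k=0,1,\dots,n-1$, so at every $t$ the columns of the matrix in Definition 3.1 satisfy a fixed nontrivial linear relation; hence $W_{\alpha,\beta}(t)\equiv 0$. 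But a fundamental set has $W_{\alpha,\beta}(t)\neq 0$ for all $t$ by Theorem 3.7, a contradiction. Note this direction uses neither the continuity hypothesis nor Theorem 3.2.

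\emph{Linearly independent $\Rightarrow$ fundamental set.} By Theorem 3.5 it is enough to exhibit a point $t_0\in(a,b)$ with $W_{\alpha,\beta}(t_0)\neq 0$. Suppose no such point exists. By the Abel formula (Theorem 3.6(2)) the factor $e^{-\Gamma(\beta+1)\int_{t_0}^{t}x^{\alpha-1}p_{n-1}(x)\,dx}$ never vanishes, so $W_{\alpha,\beta}\equiv 0$ on $(a,b)$. Fix any $t_0\in(a,b)$; since the Wronskian matrix at $t_0$ is singular, the homogeneous $n\times n$ system
$$\sum_{i=1}^{n} c_i\,{}^{k}D^{\alpha,\beta}_M y_i(t_0)=0,\qquad k=0,1,\dots,n-1,$$
has a nontrivial solution $(c_1,\dots,c_n)$. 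Put $y:=c_1y_1+\dots+c_ny_n$. By Theorem 3.4, $y$ solves $L_{\alpha,\beta}[y]=0$, and by the choice of the $c_i$ it satisfies $y(t_0)=D^{\alpha,\beta}_M y(t_0)=\dots={}^{n-1}D^{\alpha,\beta}_M y(t_0)=0$. The zero function satisfies the same initial value problem, so the uniqueness in Theorem 3.2 forces $y\equiv 0$; that is $c_1y_1+\dots+c_ny_n\equiv 0$ with the $c_i$ not all zero, contradicting linear independence. Hence $W_{\alpha,\beta}(t_0)\neq 0$ for some (in fact every) $t_0$, and Theorem 3.5 finishes the proof.

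I expect the ``if'' direction to be the main obstacle, and the delicate ingredient there is the uniqueness step: it is precisely the hypothesis $\Gamma(\beta+1)t^{\alpha-1}p_j(t)\in C(a,b)$ that lets us invoke Theorem 3.2 on the whole interval and conclude that a solution with vanishing initial data at $t_0$ must vanish identically. Everything else reduces to linearity of $D^{\alpha,\beta}_M$, Theorem 3.4, the Abel identity, and Cramer's-rule-type reasoning about the determinant $W_{\alpha,\beta}$, all of which are already in hand.
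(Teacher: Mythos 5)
Your proof is correct and is exactly the standard Wronskian-plus-uniqueness argument that the paper itself merely delegates to Theorem 4.9 of \cite{r5} ("by applying the same procedure to the $M$-derivative"), so in effect you have supplied the details the paper omits. One small quibble: your remark that the forward direction uses neither the continuity hypothesis nor Theorem 3.2 is not quite right, since that direction invokes Theorem 3.7 (fundamental set $\Rightarrow$ $W_{\alpha,\beta}(t)\neq 0$ on $(a,b)$), whose statement carries the continuity hypothesis and whose proof in turn rests on the uniqueness theorem.
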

\begin{proof}
By applying procedure in Theorem-4.9 \cite{r5} to \textit{M}-derivative, we can easily prove this theorem.
\end{proof}

\begin{mythm}
Let $y_{1},y_{2},...,y_{n}$ be a fundamental set of solutions of the equation (1) and $y_{p}$ be any particular solution of the non homogeneous equation (2). Then the general solution of the equation is $y=c_{1}y_{1}+c_{2}y_{2}+...+c_{n}y_{n}+y_{p}$
\end{mythm}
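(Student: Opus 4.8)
The plan is to reduce the statement to two facts already available: the linearity of the operator $L_{\alpha,\beta}$ (Theorem 3.3), and the defining property of a fundamental set, namely that every solution of the homogeneous equation $L_{\alpha,\beta}[y]=0$ is a linear combination of $y_{1},\dots,y_{n}$ (this is what the proof of Theorem 3.5 establishes via Cramer's rule). First I would fix an arbitrary solution $y$ of the non-homogeneous equation (2), so that $L_{\alpha,\beta}[y]=f(t)$, and introduce $z=y-y_{p}$. Applying $L_{\alpha,\beta}$ and using linearity, $L_{\alpha,\beta}[z]=L_{\alpha,\beta}[y]-L_{\alpha,\beta}[y_{p}]=f(t)-f(t)=0$, so $z$ is a solution of the homogeneous equation (1).

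Since $\{y_{1},\dots,y_{n}\}$ is a fundamental set, there are constants $c_{1},\dots,c_{n}$ with $z=c_{1}y_{1}+\cdots+c_{n}y_{n}$. Concretely, I would pick $t_{0}\in(a,b)$ with $W_{\alpha,\beta}(t_{0})\neq0$, solve by Cramer's rule (as in the proof of Theorem 3.5) the $n\times n$ linear system matching $z(t_{0}),D^{\alpha,\beta}_{M}z(t_{0}),\dots,{}^{n-1}D^{\alpha,\beta}_{M}z(t_{0})$, and then observe that $c_{1}y_{1}+\cdots+c_{n}y_{n}$ and $z$ are two solutions of (1) with identical data at $t_{0}$, hence equal on $(a,b)$ by the uniqueness half of Theorem 3.2. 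Adding $y_{p}$ back, $y=c_{1}y_{1}+\cdots+c_{n}y_{n}+y_{p}$, which is the asserted form.

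For the reverse inclusion I would simply check that every $y$ of that form solves (2): by linearity (and Theorem 3.4 for the homogeneous part), $L_{\alpha,\beta}\!\left[c_{1}y_{1}+\cdots+c_{n}y_{n}+y_{p}\right]=\sum_{i=1}^{n}c_{i}L_{\alpha,\beta}[y_{i}]+L_{\alpha,\beta}[y_{p}]=0+f(t)=f(t)$. The two directions together show that the solution set of (2) is exactly $\{c_{1}y_{1}+\cdots+c_{n}y_{n}+y_{p}:c_{1},\dots,c_{n}\in\mathbb{R}\}$, i.e. $y=c_{1}y_{1}+\cdots+c_{n}y_{n}+y_{p}$ is the general solution.

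The algebraic cancellations are routine, so the only delicate point is the passage from the constants $c_{i}$ chosen at the single point $t_{0}$ to the global identity $z=c_{1}y_{1}+\cdots+c_{n}y_{n}$ on all of $(a,b)$. I would handle this by applying the uniqueness part of Theorem 3.2 to the difference $z-(c_{1}y_{1}+\cdots+c_{n}y_{n})$, which solves (1) with zero initial data at $t_{0}$ and hence vanishes identically; here I would be careful to invoke the hypotheses $\Gamma(\beta+1)t^{\alpha-1}p_{k}(t)\in C(a,b)$ so that Theorem 3.2 applies on the interval in question.
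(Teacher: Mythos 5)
Your proposal is correct and follows essentially the same route as the paper: set $u=y-y_{p}$, use linearity of $L_{\alpha,\beta}$ to see that $u$ solves the homogeneous equation, and then express $u$ in terms of the fundamental set. You are somewhat more careful than the paper in two places — you verify the converse inclusion (that every function of the form $\sum c_{i}y_{i}+y_{p}$ actually solves (2)) and you justify the global identity $u=\sum c_{i}y_{i}$ via the uniqueness theorem rather than only matching data at $t_{0}$ — but these are refinements of the same argument, not a different one.
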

\begin{proof}
Let $L_{\alpha,\beta}$ be the differential operator and $y(t)$ and $y_p(t)$ be the solutions of the non homogeneous equation $L_{\alpha,\beta}[y]=f(t)$. If we take $u(t)=y(t)-y_{p}(t)$, then by linearity of $L_{\alpha,\beta}$ we have,
$$L_{\alpha,\beta}[u]=L_{\alpha,\beta}[y(t)-y_{p}(t)]=L_{\alpha,\beta}[y(t)]-L_{\alpha,\beta}[y_p (t)]=f(t)-f(t)=0$$
Then $u(t)$ is a solution of the homogenous equation $L_{\alpha,\beta}[y]=0$. Then by Theorem 3.4
$$u(t)=c_{1}y_{1}(t)+c_{2}y_{2}(t)+...+c_{n}y_{n}(t)$$
i.e, 			$$y(t)-y_{p}(t)=c_{1}y_{1}(t)+c_{2}y_{2}(t)+...+c_{n}y_{n}(t)$$
Then			$$ y(t)=c_{1} y_{1}(t)+c_{2} y_{2}(t)+...+c_{n}y_{n}(t)+y_{p}(t)$$
\end{proof}

\section{Solution of Homogeneous Case}
Consider the $n$ times \textit{M}-differentiable function $y$ for $\alpha \in (0,1]$ and $\beta>0$. The homogeneous sequential linear fractional differential equation with \textit{M}-derivative is 
\begin{equation}
^{n}D^{\alpha,\beta}_{M}y+p_{n-1}(t)^{n-1}D^{\alpha,\beta}_{M}y+...+p_{2}(t)^{2}D^{\alpha,\beta}_{M}y+p_{1}(t)D^{\alpha,\beta}_{M}y+p_{0}(t)y=0
\end{equation}
where $^{n}D^{\alpha,\beta}_{M}y=D^{\alpha,\beta}_{M}D^{\alpha,\beta}_{M}...D^{\alpha,\beta}_{M}y$ $n$ times, and the coefficients $p_{0},p_{1},...,p_{n-1}$ are real constants.\\
We define an $n^{th}$-order differential operator for eqn. (1) as following
\begin{equation}
L_{\alpha,\beta}[y]=^{n}D^{\alpha,\beta}_{M}y+p_{n-1}^{n-1}D^{\alpha,\beta}_{M}y+...+p_{2}^{2}D^{\alpha,\beta}_{M}y+p_{1}D^{\alpha,\beta}_{M}y+p_{0}y=0
\end{equation}
If $y_{1}(t),y_{2}(t),...,y_{n}(t)$ are linearly independent solutions of Eqn.(1), then general solution is 
	$$y=c_{1} y_{1}(t)+c_{2} y_{2}(t)+...+c_{n}y_{n}(t)$$
where $c_{1},c_{2}...c_{n}$ are arbitrary constants.

\begin{mylemma}
Suppose that  $L_{\alpha,\beta}[.]$ is a linear operator with constant coefficients and $\alpha\in(0,1]$ and $\beta>0$, then for $t>0$
	 $$L_{\alpha,\beta}[e^{\frac{r\Gamma(\beta+1)}{\alpha}t^{\alpha}}]=P_{n}(r)[e^{\frac{r\Gamma(\beta+1)}{\alpha}t^{\alpha}}]$$
Where $P_{n}(r)=r^{n}+P_{n-1}r^{n-1}+...+P_{0}$ and $r$ is a real or complex constant
\end{mylemma}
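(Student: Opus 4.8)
The plan is to reduce the entire computation to the single fact that the \textit{M}-derivative operates on $E(t):=e^{\frac{r\Gamma(\beta+1)}{\alpha}t^{\alpha}}$ simply as multiplication by the scalar $r$; once this is established, the lemma follows by an induction on the order of the derivative and a substitution into the definition of $L_{\alpha,\beta}$. Concretely, I would first compute $D^{\alpha,\beta}_{M}E(t)$ directly from property (6) of Theorem 2.1, namely $D^{\alpha,\beta}_{M}f(t)=\frac{t^{1-\alpha}}{\Gamma(\beta+1)}f'(t)$ for differentiable $f$. The ordinary chain rule gives $E'(t)=\frac{r\Gamma(\beta+1)}{\alpha}\cdot\alpha t^{\alpha-1}E(t)=r\Gamma(\beta+1)t^{\alpha-1}E(t)$, so that $D^{\alpha,\beta}_{M}E(t)=\frac{t^{1-\alpha}}{\Gamma(\beta+1)}\cdot r\Gamma(\beta+1)t^{\alpha-1}E(t)=rE(t)$; the factor $\frac{\Gamma(\beta+1)}{\alpha}$ in the exponent is chosen precisely so that the powers of $t$ and the Gamma factors cancel exactly.

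Next I would argue by induction on $k$ that ${}^{k}D^{\alpha,\beta}_{M}E(t)=r^{k}E(t)$ for $1\le k\le n$: applying $D^{\alpha,\beta}_{M}$ to the identity ${}^{k}D^{\alpha,\beta}_{M}E(t)=r^{k}E(t)$ and using linearity (item (1) of Theorem 2.1, to pull out the constant $r^{k}$) together with the base case produces ${}^{k+1}D^{\alpha,\beta}_{M}E(t)=r^{k}\cdot rE(t)=r^{k+1}E(t)$. Finally I would insert these identities into the definition (9) of the constant-coefficient operator $L_{\alpha,\beta}$, writing $P_{0},\dots,P_{n-1}$ for its coefficients and $P_{n}=1$ for the leading one: term by term one gets $P_{k}\,{}^{k}D^{\alpha,\beta}_{M}E(t)=P_{k}r^{k}E(t)$, and summing over $k=0,\dots,n$ and factoring out $E(t)$ yields $L_{\alpha,\beta}[E(t)]=\big(r^{n}+P_{n-1}r^{n-1}+\dots+P_{0}\big)E(t)=P_{n}(r)\,E(t)$; the rearrangement of the finite sum is legitimate because $L_{\alpha,\beta}$ is linear (Theorem 3.3).

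I do not anticipate a genuine obstacle in this argument; the only place demanding care is the base-case differentiation, where one must bookkeep the constants to confirm that the $t^{1-\alpha}$, $t^{\alpha-1}$ and $\Gamma(\beta+1)$ factors really do cancel. One additional point worth a remark: when $r$ is complex, property (6) and the chain rule apply verbatim to a complex-valued exponential of the real variable $t$, so the same computation goes through unchanged and the stated identity still holds.
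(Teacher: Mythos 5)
Your proof is correct and follows essentially the same route as the paper: establish that $e^{\frac{r\Gamma(\beta+1)}{\alpha}t^{\alpha}}$ is an eigenfunction of ${}^{k}D^{\alpha,\beta}_{M}$ with eigenvalue $r^{k}$, then substitute into $L_{\alpha,\beta}$ and factor out the exponential. The only difference is that you explicitly verify the base-case cancellation of the $t^{1-\alpha}$, $t^{\alpha-1}$ and $\Gamma(\beta+1)$ factors via property (6) of Theorem 2.1 and run a formal induction, whereas the paper simply asserts the derivative formulas in its Eqn.~(10) without computation.
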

\begin{proof}
\textit{M}-derivatives of $y=e^{\frac{r\Gamma(\beta+1)}{\alpha}t^{\alpha}}$  are 
\begin{equation}
D^{\alpha,\beta}_{M}y=re^{\frac{r\Gamma(\beta+1)}{\alpha}t^{\alpha}},  ^{2}D^{\alpha,\beta}_{M} y=r^2 e^{\frac{r\Gamma(\beta+1)}{\alpha}t^{\alpha}},...,^{n}D^{\alpha,\beta}_{M} y=r^n e^{\frac{r\Gamma(\beta+1)}{\alpha}t^{\alpha}}
\end{equation}
We substitute $y=e^{\frac{r\Gamma(\beta+1)}{\alpha}t^{\alpha}}$ and Eqn.(10) in $L_{\alpha,\beta}[y]$
$$L_{\alpha,\beta}[e^{\frac{r\Gamma(\beta+1)}{\alpha}t^{\alpha}}]=(^{n}D^{\alpha,\beta}_{M}+p_{n-1}^{n-1}D^{\alpha,\beta}_{M}+...+p_{2}^{2}D^{\alpha,\beta}_{M}+p_{1}D^{\alpha,\beta}_{M}+p_{0})e^{\frac{r\Gamma(\beta+1)}{\alpha}t^{\alpha}}$$
$$=(r^{n}+P_{n-1}r^{n-1}+...+P_{0})e^{\frac{r\Gamma(\beta+1)}{\alpha}t^{\alpha}}$$
$$L_{\alpha,\beta}[e^{\frac{r\Gamma(\beta+1)}{\alpha}t^{\alpha}}]=p_{n}(r)e^{\frac{r\Gamma(\beta+1)}{\alpha}t^{\alpha}}$$
Hence, the proof is completed.
\end{proof}

The solution to the equation (8) is  $y=e^{\frac{r\Gamma(\beta+1)}{\alpha}t^{\alpha}}$.
\\It follows from Eqn.(9) and Lemma 3.1 that 
$$L_{\alpha,\beta}[e^{\frac{r\Gamma(\beta+1)}{\alpha}t^{\alpha}}]=p_{n}(r)e^{\frac{r\Gamma(\beta+1)}{\alpha}t^{\alpha}}=0$$
Where $P_{n}(r)=r^{n}+P_{n-1}r^{n-1}+...+P_{0}$ is called as the characteristic polynomial. For all $r$, we have $e^{\frac{r\Gamma(\beta+1)}{\alpha}t^{\alpha}}\neq0$. Hence $P_{n}(r)=0$.
\\Here
\begin{equation}
			 r^{n}+P_{n-1}r^{n-1}+...+P_{0}=0		
\end{equation}
is called as the characteristic equation.

\begin{mylemma}
Let $r$ be a root of the characteristic equation (11), then
$$\frac{\partial}{\partial r}\{L_{\alpha,\beta}[e^{\frac{r\Gamma(\beta+1)}{\alpha}t^{\alpha}}]\}=L_{\alpha,\beta} [\frac{\partial}{\partial r}e^{\frac{r\Gamma(\beta+1)}{\alpha}t^{\alpha}}]$$
and
		$\frac{\partial^{l}}{\partial r^{l}}e^{\frac{r\Gamma(\beta+1)}{\alpha}t^{\alpha}} = (\frac{\Gamma(\beta+1)}{\alpha}t^{\alpha})^{l}e^{\frac{r\Gamma(\beta+1)}{\alpha}t^{\alpha}}$ where $l$ is integer.
\end{mylemma}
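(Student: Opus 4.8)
The plan is to reduce both claims to elementary facts about ordinary partial derivatives by exploiting property~(6) of Theorem~2.1, which identifies the $M$-derivative with a classical differential operator in $t$.

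First I would rewrite the operator. By property~(6), for any differentiable function $g$ of $t$ one has $D^{\alpha,\beta}_{M}g=\frac{t^{1-\alpha}}{\Gamma(\beta+1)}\,g'$, so each iterated derivative $^{k}D^{\alpha,\beta}_{M}$ is an honest $k$-th order linear differential operator in the single variable $t$ whose coefficients are powers of $t$ times constants. Since the $p_{i}$ are constants here, $L_{\alpha,\beta}$ is a finite sum $\sum_{k=0}^{n}a_{k}(t)\,\frac{d^{k}}{dt^{k}}$ in which no coefficient $a_{k}(t)$ depends on $r$. Next I would write $E(t,r):=e^{\frac{r\Gamma(\beta+1)}{\alpha}t^{\alpha}}$ and observe that on $t>0$ this is jointly $C^{\infty}$ (indeed real-analytic) in $t$ and $r$, so the mixed partials $\partial_{r}\partial_{t}^{k}E$ and $\partial_{t}^{k}\partial_{r}E$ all exist and are continuous; Schwarz's theorem then gives $\partial_{r}\partial_{t}^{k}E=\partial_{t}^{k}\partial_{r}E$ for $k=0,\dots,n$. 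Applying $\partial_{r}$ to $L_{\alpha,\beta}[E]=\sum_{k}a_{k}(t)\,\partial_{t}^{k}E$ and passing it through the $r$-independent coefficients and (by Schwarz) through the $t$-derivatives yields $\partial_{r}L_{\alpha,\beta}[E]=\sum_{k}a_{k}(t)\,\partial_{t}^{k}(\partial_{r}E)=L_{\alpha,\beta}[\partial_{r}E]$, which is the first identity. I would note that the hypothesis ``$r$ solves~(11)'' is not actually used for this identity; it merely flags the context (repeated roots) in which the lemma will be invoked.

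For the second identity I would set $w:=\frac{\Gamma(\beta+1)}{\alpha}t^{\alpha}$, which is independent of $r$, so that $E=e^{rw}$, and induct on $l$: the case $l=0$ is trivial, and if $\partial_{r}^{l}E=w^{l}e^{rw}$ then $\partial_{r}^{l+1}E=\partial_{r}\bigl(w^{l}e^{rw}\bigr)=w^{l+1}e^{rw}$ since $w$ carries no $r$-dependence. This is precisely $\bigl(\frac{\Gamma(\beta+1)}{\alpha}t^{\alpha}\bigr)^{l}e^{\frac{r\Gamma(\beta+1)}{\alpha}t^{\alpha}}$, completing the claim for every nonnegative integer $l$.

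The only step that needs genuine care is the interchange of $\partial_{r}$ with the iterated $M$-derivative in $t$; the whole argument rests on that, and it is licensed exactly by the joint smoothness of $E$ together with Schwarz's theorem on equality of mixed partials. Everything else is bookkeeping and a one-line induction, so I do not expect a substantive obstacle.
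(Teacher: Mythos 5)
Your proof is correct, and it is in fact more careful than the paper's own. The paper justifies the interchange $\partial_{r}L_{\alpha,\beta}=L_{\alpha,\beta}\partial_{r}$ by saying only that both operators are linear (citing Theorem~3.3 for $L_{\alpha,\beta}$), and then asserts the second identity as following ``from classical derivative.'' Linearity alone does not make two operators commute, so the paper's stated justification is really a placeholder for the argument you actually supply: $L_{\alpha,\beta}$ acts only in $t$ (via property~(6) of Theorem~2.1 it is a classical linear differential operator $\sum_{k}a_{k}(t)\,d^{k}/dt^{k}$ with $r$-independent coefficients), $\partial_{r}$ acts only in $r$, and $E(t,r)=e^{\frac{r\Gamma(\beta+1)}{\alpha}t^{\alpha}}$ is jointly smooth on $t>0$, so Schwarz's theorem lets $\partial_{r}$ pass through each $\partial_{t}^{k}$. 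Your induction for $\partial_{r}^{l}e^{rw}=w^{l}e^{rw}$ with $w=\frac{\Gamma(\beta+1)}{\alpha}t^{\alpha}$ is the same computation the paper gestures at. Your observation that the hypothesis ``$r$ is a root of (11)'' is never used in this lemma is also accurate; it only matters in Lemma~4.3. In short: same destination, but you actually close the commutation step that the paper leaves open.
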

\begin{proof}
From Theorem 3.3 it follows that $L_{\alpha,\beta}[.]$ is linear and also  $\frac{\partial}{\partial r}$ is linear by property of classical derivative. Hence
$$\frac{\partial}{\partial r}\{L_{\alpha,\beta}[e^{\frac{r\Gamma(\beta+1)}{\alpha}t^{\alpha}}]\}=L_{\alpha,\beta} [\frac{\partial}{\partial r}e^{\frac{r\Gamma(\beta+1)}{\alpha}t^{\alpha}}]$$
Additionally, from classical derivative, it follows that
$$\frac{\partial^{l}}{\partial r^{l}}e^{\frac{r\Gamma(\beta+1)}{\alpha}t^{\alpha}} = (\frac{\Gamma(\beta+1)}{\alpha}t^{\alpha})^{l}e^{\frac{r\Gamma(\beta+1)}{\alpha}t^{\alpha}}$$ 
\end{proof}

\begin{mylemma}
If $r_{1}$ is a root of multiplicity of $\mu_{1}$of the characteristic equation (11), then the functions $y_{1,l}(t)$ , where $l=0,1,...,\mu_{l-1}$ such that
$$y_{1,l} = (\frac{\Gamma(\beta+1)}{\alpha}t^{\alpha})^{l}e^{\frac{r_{1}\Gamma(\beta+1)}{\alpha}t^{\alpha}}$$ are solutions of Eq.(8).
\end{mylemma}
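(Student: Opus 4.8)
The plan is to mimic the classical argument for repeated roots of a constant-coefficient linear ODE, using Lemma 3.1 as the starting identity and Lemma 3.2 to push the parameter-derivative $\partial/\partial r$ inside the operator $L_{\alpha,\beta}$. First I would record the purely algebraic fact that if $r_{1}$ is a root of $P_{n}(r)$ of multiplicity $\mu_{1}$, then
$$P_{n}(r_{1})=P_{n}'(r_{1})=\cdots=P_{n}^{(\mu_{1}-1)}(r_{1})=0,$$
which is immediate from writing $P_{n}(r)=(r-r_{1})^{\mu_{1}}Q(r)$ with $Q(r_{1})\neq 0$ and differentiating $l$ times for $l\leq\mu_{1}-1$.

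Next, starting from the identity of Lemma 3.1,
$$L_{\alpha,\beta}\left[e^{\frac{r\Gamma(\beta+1)}{\alpha}t^{\alpha}}\right]=P_{n}(r)\,e^{\frac{r\Gamma(\beta+1)}{\alpha}t^{\alpha}},$$
I would apply $\partial^{l}/\partial r^{l}$ to both sides for a fixed integer $l$ with $0\leq l\leq\mu_{1}-1$. On the left, iterating the interchange from Lemma 3.2 (which extends from the first $r$-derivative to the $l$-th by induction on $l$) together with the formula $\frac{\partial^{l}}{\partial r^{l}}e^{\frac{r\Gamma(\beta+1)}{\alpha}t^{\alpha}}=\left(\frac{\Gamma(\beta+1)}{\alpha}t^{\alpha}\right)^{l}e^{\frac{r\Gamma(\beta+1)}{\alpha}t^{\alpha}}$ produces exactly $L_{\alpha,\beta}\left[\left(\frac{\Gamma(\beta+1)}{\alpha}t^{\alpha}\right)^{l}e^{\frac{r\Gamma(\beta+1)}{\alpha}t^{\alpha}}\right]$. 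On the right, the Leibniz rule gives
$$\frac{\partial^{l}}{\partial r^{l}}\left[P_{n}(r)\,e^{\frac{r\Gamma(\beta+1)}{\alpha}t^{\alpha}}\right]=\sum_{k=0}^{l}\binom{l}{k}P_{n}^{(k)}(r)\left(\frac{\Gamma(\beta+1)}{\alpha}t^{\alpha}\right)^{l-k}e^{\frac{r\Gamma(\beta+1)}{\alpha}t^{\alpha}}.$$

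Finally I would set $r=r_{1}$. Since every index $k$ in the sum satisfies $k\leq l\leq\mu_{1}-1$, the first step forces $P_{n}^{(k)}(r_{1})=0$ for all such $k$, so the entire right-hand side vanishes; hence $L_{\alpha,\beta}[y_{1,l}]=0$ with $y_{1,l}(t)=\left(\frac{\Gamma(\beta+1)}{\alpha}t^{\alpha}\right)^{l}e^{\frac{r_{1}\Gamma(\beta+1)}{\alpha}t^{\alpha}}$, which is precisely the claim for $l=0,1,\ldots,\mu_{1}-1$. The only genuinely delicate point is justifying the \emph{repeated} interchange of $L_{\alpha,\beta}$ and $\partial/\partial r$: Lemma 3.2 supplies the base case, and at each further stage one must check that differentiation under the operator is still legitimate. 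This is routine, since $L_{\alpha,\beta}$ is a finite $\mathbb{R}$-linear combination of iterated $M$-derivatives in the variable $t$, each of which commutes with $\partial/\partial r$ because $e^{\frac{r\Gamma(\beta+1)}{\alpha}t^{\alpha}}$ is jointly smooth in $(t,r)$ for $t>0$; nevertheless this is the step I would write out with the most care.
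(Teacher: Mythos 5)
Your proposal is correct and follows essentially the same route as the paper's proof: differentiate the identity $L_{\alpha,\beta}[e^{\frac{r\Gamma(\beta+1)}{\alpha}t^{\alpha}}]=P_{n}(r)e^{\frac{r\Gamma(\beta+1)}{\alpha}t^{\alpha}}$ $l$ times in $r$, interchange $\partial^{l}/\partial r^{l}$ with $L_{\alpha,\beta}$ via the preceding lemma, expand the right side by the Leibniz rule, and evaluate at $r=r_{1}$ where all derivatives $P_{n}^{(j)}(r_{1})$, $j\leq\mu_{1}-1$, vanish. Your explicit attention to justifying the repeated interchange is a point the paper leaves implicit, but the argument is the same.
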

\begin{proof}
Consider $L_{\alpha,\beta}[e^{\frac{r\Gamma(\beta+1)}{\alpha}t^{\alpha}}]=p_{n}(r)e^{\frac{r\Gamma(\beta+1)}{\alpha}t^{\alpha}}$. From Lemma 4.2 and applying classical Leibniz rule it follows that 
$$\Bigg\{L_{\alpha,\beta}\bigg[\frac{\partial^l}{\partial r^l}e^{\frac{r\Gamma(\beta+1)}{\alpha}t^{\alpha}}\bigg]\Bigg\}_{r=r_{1}}=\Bigg\{\frac{\partial^l}{\partial r^l}\Bigg[L_{\alpha,\beta}\bigg[e^{\frac{r\Gamma(\beta+1)}{\alpha}t^{\alpha}}\bigg]\Bigg]\Bigg\}_{r=r_{1}}=\Bigg\{\frac{\partial^l}{\partial r^l}\bigg[p_{n}(r)e^{\frac{r\Gamma(\beta+1)}{\alpha}t^{\alpha}}\bigg]\Bigg\}_{r=r_{1}}$$
$$\Bigg\{L_{\alpha,\beta}\bigg[\frac{\partial^l}{\partial r^l}e^{\frac{r\Gamma(\beta+1)}{\alpha}t^{\alpha}}\bigg]\Bigg\}_{r=r_{1}}=\sum_{j=0}^{l}
\begin{pmatrix} l\\j\end{pmatrix}\bigg[\frac{\partial^{l-j}}{\partial r^{l-j}}e^{\frac{r\Gamma(\beta+1)}{\alpha}t^{\alpha}}\bigg]_{r=r_{1}}\frac{\partial^{j}}{\partial r^{j}}\big[P_{n}(r)\big]_{r=r_{1}}$$
Since $\frac{\partial^{j}}{\partial r^{j}}\big[P_{n}(r)\big]_{r=r_{1}}=0$ for $j=0,1,...,\mu_{1}-1$
$$\Bigg\{L_{\alpha,\beta}\bigg[\frac{\partial^l}{\partial r^l}e^{\frac{r\Gamma(\beta+1)}{\alpha}t^{\alpha}}\bigg]\Bigg\}_{r=r_{1}}=0$$
From Lemma 4.2
$$\Bigg\{L_{\alpha,\beta}\bigg[\Big(\frac{\Gamma(\beta+1)}{\alpha}t^{\alpha}\Big)^{l}e^{\frac{r\Gamma(\beta+1)}{\alpha}t^{\alpha}}\bigg]\Bigg\}_{r=r_{1}}=0$$
$$L_{\alpha,\beta}\Big[y_{1,l}(t)\Big]=0$$
Hence $y_{1,l}(t)$ are solutions of Eq.(8).
\end{proof}

\begin{mycor}
Let $r_{j} ,j=1,2,...,k$ are distinct roots of multiplicity $\mu_{j},j=1,2,...,k$ of the characteristic Eq.(5). Then the following functions 
$$\bigcup_{j=1}^{k}\Bigg\{\Big(\frac{\Gamma(\beta+1)}{\alpha}t^{\alpha}\Big)^{l}e^{\frac{r_{j}\Gamma(\beta+1)}{\alpha}t^{\alpha}}\Bigg\}_{l=0}^{\mu_{j}-1}$$
\end{mycor}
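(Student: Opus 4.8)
The corollary's conclusion (truncated in the excerpt) should read that the union of these $n$ functions is a fundamental set of solutions of Eq.(8), where $n=\deg P_{n}=\sum_{j=1}^{k}\mu_{j}$. The plan is to establish this in three steps: each listed function solves Eq.(8), there are exactly $n$ of them, and they are linearly independent; the conclusion then follows from Theorem 3.8.

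First I would apply Lemma 4.3 to each root $r_{j}$ in turn: it yields that the $\mu_{j}$ functions $\big(\frac{\Gamma(\beta+1)}{\alpha}t^{\alpha}\big)^{l}e^{\frac{r_{j}\Gamma(\beta+1)}{\alpha}t^{\alpha}}$, $l=0,1,\dots,\mu_{j}-1$, are solutions of Eq.(8). Since $P_{n}$ has degree $n$ and $r_{1},\dots,r_{k}$ are all its roots listed with multiplicity, $\sum_{j=1}^{k}\mu_{j}=n$, so the union contains exactly $n$ solutions; it therefore suffices, by Theorem 3.8, to prove these $n$ functions are linearly independent on $(a,b)$.

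For the \emph{linear independence} I would change variables to $u:=\frac{\Gamma(\beta+1)}{\alpha}t^{\alpha}$, which is strictly increasing in $t>0$ for $\alpha\in(0,1]$ and hence maps $(a,b)$ bijectively onto an interval; consequently a linear relation among the functions of $t$ holds iff the corresponding relation among the functions of $u$ holds, and under the substitution each member of the family becomes $u^{l}e^{r_{j}u}$. A vanishing combination $\sum_{j=1}^{k}\big(\sum_{l=0}^{\mu_{j}-1}c_{j,l}u^{l}\big)e^{r_{j}u}\equiv0$ forces every $c_{j,l}=0$ by the classical linear independence of the functions $u^{l}e^{r u}$ with distinct exponents $r$ over any nondegenerate interval (their Wronskian is a nonvanishing analytic function). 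Hence the original family is linearly independent on $(a,b)$, and Theorem 3.8 gives that it is a fundamental set of solutions, so the general solution of Eq.(8) is their arbitrary linear combination. The main obstacle is exactly this independence step: one could instead verify $W_{\alpha,\beta}(t_{0})\neq0$ at some $t_{0}$ and invoke Theorem 3.5, but the repeated $M$-differentiation built into the $M$-Wronskian makes that computation much more cumbersome than the change-of-variables argument, which reduces everything to a standard fact about polynomial-times-exponential functions.
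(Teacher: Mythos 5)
Your argument is correct and, in fact, supplies more than the paper does: the paper's entire proof is the single sentence ``Corollary 4.1 follows from Lemma 4.3 and Theorem 3.5,'' i.e.\ it invokes the Wronskian criterion (nonvanishing of $W_{\alpha,\beta}$ at some $t_{0}$) without ever computing or estimating that Wronskian. You take the same first step (Lemma 4.3 applied to each root $r_{j}$ gives the $\mu_{j}$ solutions attached to $r_{j}$), but then route the independence claim through Theorem 3.8 (fundamental set $\iff$ linear independence) and actually prove independence by the substitution $u=\frac{\Gamma(\beta+1)}{\alpha}t^{\alpha}$, which is a strictly increasing bijection for $t>0$ and reduces everything to the classical independence of $\{u^{l}e^{r_{j}u}\}$ with distinct $r_{j}$. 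This is a genuine improvement: it closes the gap the paper leaves open, and it is exactly the device that makes the $M$-fractional setting tractable (every statement about $e^{r\Gamma(\beta+1)t^{\alpha}/\alpha}$ pulls back to the integer-order statement in the variable $u$). One small caveat about your reconstruction of the truncated conclusion: the corollary as stated does not assume that the real roots $r_{1},\dots,r_{k}$ exhaust the roots of $P_{n}$ (the hypothesis $\sum_{j}\mu_{j}+2\sum_{j}\sigma_{j}=n$ appears only later, in Theorem 4.1, where real and complex roots are combined), so the intended conclusion here is most likely only that the listed functions are linearly independent solutions of Eq.~(8); your ``fundamental set'' conclusion is valid precisely under your added assumption $\sum_{j=1}^{k}\mu_{j}=n$, which you state explicitly, so nothing is wrong --- just note that your version is the special case in which all roots are real.
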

\begin{proof}
Corollary 4.1 follows from Lemma 4.3 and Theorem 3.5.
\end{proof}

\begin{mylemma}
If  $r_{1}$  and $\bar{r_{1}}$  $(r_{1}=a+ib,b\neq0)$ are complex roots of multiplicity $\sigma_{1}$ of the characteristic equation (11), then for $l=0,1,...,\sigma_{1}-1$, the functions
$$y_{1,l}(t)=\Big(\frac{\Gamma(\beta+1)}{\alpha}t^{\alpha}\Big)^{l}e^{\frac{a\Gamma(\beta+1)}{\alpha}t^{\alpha}}\Big[cos\Big(\frac{b\Gamma(\beta+1)}{\alpha}t^{\alpha}\Big)+isin\Big(\frac{b\Gamma(\beta+1)}{\alpha}t^{\alpha}\Big)\Big]$$and
$$y_{2,l}(t)=\Big(\frac{\Gamma(\beta+1)}{\alpha}t^{\alpha}\Big)^{l}e^{\frac{a\Gamma(\beta+1)}{\alpha}t^{\alpha}}\Big[cos\Big(\frac{b\Gamma(\beta+1)}{\alpha}t^{\alpha}\Big)-isin\Big(\frac{b\Gamma(\beta+1)}{\alpha}t^{\alpha}\Big)\Big]$$
are linearly independent solutions of Eq.(8).
\end{mylemma}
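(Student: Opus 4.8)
The plan is to reduce this statement to Lemma 4.3 together with the fact that the characteristic equation (11) has real coefficients. First I would observe that since $p_{0},\dots,p_{n-1}\in\mathbb{R}$, the characteristic polynomial $P_{n}(r)$ has real coefficients, so the non-real root $r_{1}=a+ib$ ($b\neq0$) is accompanied by its conjugate $\bar r_{1}=a-ib$, and both have the same multiplicity $\sigma_{1}$. Applying Lemma 4.3 to the root $r_{1}$ shows that, for $l=0,1,\dots,\sigma_{1}-1$, the (complex-valued) functions $\big(\tfrac{\Gamma(\beta+1)}{\alpha}t^{\alpha}\big)^{l}e^{\frac{r_{1}\Gamma(\beta+1)}{\alpha}t^{\alpha}}$ solve Eq.(8); applying Lemma 4.3 to $\bar r_{1}$ shows likewise that $\big(\tfrac{\Gamma(\beta+1)}{\alpha}t^{\alpha}\big)^{l}e^{\frac{\bar r_{1}\Gamma(\beta+1)}{\alpha}t^{\alpha}}$ solve Eq.(8). (Here I would note that the proof of Lemma 4.3 only used the differentiation identities of Lemma 4.2 and the algebra of $P_{n}(r)$, both of which are valid for complex $r$, so nothing new is needed.)

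Next I would invoke Euler's formula. Writing $e^{\frac{r_{1}\Gamma(\beta+1)}{\alpha}t^{\alpha}}=e^{\frac{a\Gamma(\beta+1)}{\alpha}t^{\alpha}}\big[\cos\big(\tfrac{b\Gamma(\beta+1)}{\alpha}t^{\alpha}\big)+i\sin\big(\tfrac{b\Gamma(\beta+1)}{\alpha}t^{\alpha}\big)\big]$, and similarly for $\bar r_{1}$ with $-i$ in place of $i$, one sees that the functions produced by Lemma 4.3 are precisely the $y_{1,l}(t)$ and $y_{2,l}(t)$ in the statement. This already establishes that each $y_{1,l}$ and each $y_{2,l}$ is a solution of Eq.(8), so it only remains to verify linear independence.

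For linear independence I would argue directly. Suppose $\sum_{l=0}^{\sigma_{1}-1}\big(c_{1,l}y_{1,l}(t)+c_{2,l}y_{2,l}(t)\big)\equiv0$ on $(a,b)$. Dividing by the nowhere-vanishing factor $e^{\frac{a\Gamma(\beta+1)}{\alpha}t^{\alpha}}$ and putting $s=\tfrac{\Gamma(\beta+1)}{\alpha}t^{\alpha}$, the identity becomes $Q_{1}(s)\cos(bs)+Q_{2}(s)\sin(bs)\equiv0$, where $Q_{1},Q_{2}$ are polynomials in $s$ of degree at most $\sigma_{1}-1$ whose coefficients are linear combinations of the $c_{j,l}$. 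Since $s$ ranges over an interval and the functions $\{s^{l}\cos(bs),\,s^{l}\sin(bs)\}$ are linearly independent, all coefficients of $Q_{1},Q_{2}$ vanish, which forces every $c_{j,l}=0$; hence the $2\sigma_{1}$ functions are linearly independent. Alternatively, one may note that since $b\neq0$ the roots $r_{1}$ and $\bar r_{1}$ are distinct, so $\{y_{1,l}\}_{l=0}^{\sigma_{1}-1}\cup\{y_{2,l}\}_{l=0}^{\sigma_{1}-1}$ is a sub-collection of the fundamental set assembled in Corollary 4.1 and is therefore linearly independent.

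The main obstacle I anticipate is not conceptual but bookkeeping: keeping the substitution $s=\tfrac{\Gamma(\beta+1)}{\alpha}t^{\alpha}$ consistent, confirming that Lemma 4.3 applies verbatim over $\mathbb{C}$, and checking that the conjugate root inherits the multiplicity $\sigma_{1}$. Once these are in place the argument is a routine application of Euler's formula and of the classical independence of $\{s^{l}\cos(bs),s^{l}\sin(bs)\}$.
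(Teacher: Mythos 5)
Your proposal is correct and follows essentially the same route as the paper: apply Lemma 4.3 to $r_{1}$ and to $\bar r_{1}$ (noting that the argument is valid for complex $r$) and then rewrite the resulting exponentials via Euler's identity. In fact you go further than the paper, which asserts linear independence without argument; your explicit verification via the independence of $\{s^{l}\cos(bs),\,s^{l}\sin(bs)\}$ (or by appeal to the fundamental set of Corollary 4.1) fills a step the paper's proof omits.
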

\begin{proof}
Since $r_{1}=a+ib$ is a root of multiplicity $\sigma_{1}$ of the characteristic equation (11), From Lemma 4.3 and using Euler’s identity it follows that, the functions
$$y_{1,l}(t)=\Big(\frac{\Gamma(\beta+1)}{\alpha}t^{\alpha}\Big)^{l}e^{\frac{(a+ib)\Gamma(\beta+1)}{\alpha}t^{\alpha}}$$
i.e$$y_{1,l}(t)=\Big(\frac{\Gamma(\beta+1)}{\alpha}t^{\alpha}\Big)^{l}e^{\frac{a\Gamma(\beta+1)}{\alpha}t^{\alpha}}\Big[cos\Big(\frac{b\Gamma(\beta+1)}{\alpha}t^{\alpha}\Big)+isin\Big(\frac{b\Gamma(\beta+1)}{\alpha}t^{\alpha}\Big)\Big]$$
are solutions of the Eq.(8).
Similarly, for  $\bar{r_{1}}=a-ib$, the functions
$$y_{2,l}(t)=\Big(\frac{\Gamma(\beta+1)}{\alpha}t^{\alpha}\Big)^{l}e^{\frac{(a-ib)\Gamma(\beta+1)}{\alpha}t^{\alpha}}$$
i.e$$y_{2,l}(t)=\Big(\frac{\Gamma(\beta+1)}{\alpha}t^{\alpha}\Big)^{l}e^{\frac{a\Gamma(\beta+1)}{\alpha}t^{\alpha}}\Big[cos\Big(\frac{b\Gamma(\beta+1)}{\alpha}t^{\alpha}\Big)-isin\Big(\frac{b\Gamma(\beta+1)}{\alpha}t^{\alpha}\Big)\Big]$$
are solutions of the Eq.(8). Hence proof is completed.
\end{proof}

\begin{mycor}
If $\big\{r_{j},\bar{r_{j}}\big\}_{j=1}^{m}, r_{j}=a_{j}+ib_{j}, b_{j}\neq0 $ distinct $2m$ roots of multiplicity $\big\{\sigma_{j}\big\}_{j=1}^{m}$ of the characteristic equation (11),then, the functions
$$\bigcup_{j=1}^{m}\Bigg\{\Big(\frac{\Gamma(\beta+1)}{\alpha}t^{\alpha}\Big)^{l}e^{\frac{a_{j}\Gamma(\beta+1)}{\alpha}t^{\alpha}}\Big[cos\Big(\frac{b_{j}\Gamma(\beta+1)}{\alpha}t^{\alpha}\Big)+isin\Big(\frac{b_{j}\Gamma(\beta+1)}{\alpha}t^{\alpha}\Big)\Big]\Bigg\}_{l=0}^{\sigma_{j}-1}$$ and
$$\bigcup_{j=1}^{m}\Bigg\{\Big(\frac{\Gamma(\beta+1)}{\alpha}t^{\alpha}\Big)^{l}e^{\frac{a_{j}\Gamma(\beta+1)}{\alpha}t^{\alpha}}\Big[cos\Big(\frac{b_{j}\Gamma(\beta+1)}{\alpha}t^{\alpha}\Big)-isin\Big(\frac{b_{j}\Gamma(\beta+1)}{\alpha}t^{\alpha}\Big)\Big]\Bigg\}_{l=0}^{\sigma_{j}-1}$$
\end{mycor}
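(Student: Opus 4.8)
The plan is to argue exactly as in the proof of Corollary 4.1, with Lemma 4.4 playing the role of Lemma 4.3. First, by Lemma 4.4, for each conjugate pair $\{r_{j},\bar r_{j}\}$ with $r_{j}=a_{j}+ib_{j}$, $b_{j}\neq 0$, and multiplicity $\sigma_{j}$, the $2\sigma_{j}$ functions obtained by letting $l$ run through $0,1,\dots,\sigma_{j}-1$ in the two displayed families are solutions of Eq.~(8); taking the union over $j=1,\dots,m$ and invoking Theorem 3.4, every linear combination of these functions is again a solution of Eq.~(8). Adjoining the real-root family of Corollary 4.1 and counting, the fundamental theorem of algebra gives $\sum_{j}\mu_{j}+\sum_{j}2\sigma_{j}=n=\deg P_{n}$, so altogether we have produced exactly $n$ solutions of Eq.~(8).

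It remains to verify that these $n$ solutions are linearly independent; once this is done they form a fundamental set of solutions by Theorem 3.8 (equivalently, one checks $W_{\alpha,\beta}(t_{0})\neq 0$ at some $t_{0}\in(a,b)$ and quotes Theorem 3.5), and Theorem 3.9 then gives the general solution of Eq.~(8) as their linear combination. For the independence I would substitute $\tau=\frac{\Gamma(\beta+1)}{\alpha}t^{\alpha}$, which for $t>0$ is a strictly increasing smooth bijection of $(0,\infty)$ onto itself; since precomposition with such a bijection preserves linear independence of a family of functions, it suffices to treat the transformed family. By Euler's identity the transformed functions are precisely the classical quasi-polynomials $\tau^{l}e^{a_{j}\tau}\cos(b_{j}\tau)$ and $\tau^{l}e^{a_{j}\tau}\sin(b_{j}\tau)$, together with the functions attached to the real roots of $P_{n}$ via Corollary 4.1; their linear independence is the standard fact underlying constant-coefficient linear ODE theory: a vanishing linear combination, grouped by distinct exponent and divided through by the corresponding exponential, forces every polynomial coefficient to vanish.

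I expect the main obstacle to be precisely this last independence step --- everything before it is a direct appeal to Lemma 4.4, Corollary 4.1 and Theorem 3.4. Within it, the two points that need care are the reduction through the substitution $\tau=\frac{\Gamma(\beta+1)}{\alpha}t^{\alpha}$ (so that facts about polynomials times exponentials in $\tau$ transfer back to functions of $t$ on the interval in question) and the bookkeeping confirming that the real-root and complex-root families together exhaust all $n$ roots of $P_{n}$ counted with multiplicity, so that the independent set obtained is a genuine basis rather than merely an independent subset.
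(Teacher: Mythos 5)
Your proposal is correct and follows essentially the same route as the paper, which simply cites Lemma 4.4 (to show each $y_{1,l}$, $y_{2,l}$ attached to a conjugate pair is a solution of Eq.~(8)) together with Theorem 3.5. The only difference is that you actually carry out the linear-independence verification --- via the substitution $\tau=\frac{\Gamma(\beta+1)}{\alpha}t^{\alpha}$ and the classical independence of quasi-polynomials --- which the paper leaves entirely implicit; that added detail is sound and strengthens rather than alters the argument.
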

\begin{proof}
To prove corollary 4.2, it is sufficient to apply the Lemma 4.4 and Theorem 3.5.
\end{proof}

\begin{mythm}
If $\big\{r_{j}\big\}_{j=1}^{k}$ are distinct $k$ roots of multiplicity $\big\{\mu_{j}\big\}_{j=1}^{k}$ and $\big\{\lambda_{j},\bar{\lambda_{j}}\big\}_{j=1}^{m}, \lambda_{j}=a_{j}+ib_{j},b_{j}\neq 0$ are distinct $2m$ roots of multiplicity $\big\{\sigma_{j}\big\}_{j=1}^{m}$ of the characteristic equation (11) such that $\sum_{j=1}^{k}\mu_{j}+2\sum_{j=1}^{m}\sigma_{j}=n$, then the functions
$$\bigcup_{j=1}^{k}\Bigg\{\Big(\frac{\Gamma(\beta+1)}{\alpha}t^{\alpha}\Big)^{l}e^{\frac{r_{j}\Gamma(\beta+1)}{\alpha}t^{\alpha}}\Bigg\}_{l=0}^{\mu_{j}-1}$$
$$\bigcup_{j=1}^{m}\Bigg\{\Big(\frac{\Gamma(\beta+1)}{\alpha}t^{\alpha}\Big)^{l}e^{\frac{a_{j}\Gamma(\beta+1)}{\alpha}t^{\alpha}}\Big[cos\Big(\frac{b_{j}\Gamma(\beta+1)}{\alpha}t^{\alpha}\Big)+isin\Big(\frac{b_{j}\Gamma(\beta+1)}{\alpha}t^{\alpha}\Big)\Big]\Bigg\}_{l=0}^{\sigma_{j}-1}$$ and
$$\bigcup_{j=1}^{m}\Bigg\{\Big(\frac{\Gamma(\beta+1)}{\alpha}t^{\alpha}\Big)^{l}e^{\frac{a_{j}\Gamma(\beta+1)}{\alpha}t^{\alpha}}\Big[cos\Big(\frac{b_{j}\Gamma(\beta+1)}{\alpha}t^{\alpha}\Big)-isin\Big(\frac{b_{j}\Gamma(\beta+1)}{\alpha}t^{\alpha}\Big)\Big]\Bigg\}_{l=0}^{\sigma_{j}-1}$$
are the fundamental set of solutions of the equation (8).
\end{mythm}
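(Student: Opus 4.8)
The plan is to check the three properties that, by Theorem 3.8, characterise a fundamental set of solutions: that the displayed collection consists of solutions of (8), that it contains exactly $n$ functions, and that these $n$ functions are linearly independent. The counting is immediate from the hypothesis $\sum_{j=1}^{k}\mu_{j}+2\sum_{j=1}^{m}\sigma_{j}=n$, since the first union contributes $\sum_{j=1}^{k}\mu_{j}$ functions and the second and third together contribute $2\sum_{j=1}^{m}\sigma_{j}$. That each listed function solves (8) is also already in hand: for a real root $r_{j}$ of multiplicity $\mu_{j}$ this is Lemma 4.3 (equivalently Corollary 4.1), and for a complex conjugate pair $\lambda_{j},\bar\lambda_{j}$ of multiplicity $\sigma_{j}$ it is Lemma 4.4 (equivalently Corollary 4.2).

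The substantive step will be the linear independence of the full union taken across all of the roots at once; Corollaries 4.1 and 4.2 only deliver independence within each block, and one must rule out interference between blocks attached to different roots. I would handle this by transporting the problem to the classical constant-coefficient setting via the substitution $s=\frac{\Gamma(\beta+1)}{\alpha}t^{\alpha}$, which is a smooth bijection of $(0,\infty)$ onto itself. Property (6) of Theorem 2.1 together with the chain rule gives $D^{\alpha,\beta}_{M}f(t)=\frac{d}{ds}f$, so under this change of variable $L_{\alpha,\beta}$ becomes the ordinary constant-coefficient operator $\frac{d^{n}}{ds^{n}}+p_{n-1}\frac{d^{n-1}}{ds^{n-1}}+\dots+p_{0}$ with the very same characteristic polynomial $P_{n}(r)$, and each member of the displayed collection becomes the corresponding classical quasi-polynomial in $s$: namely $s^{l}e^{r_{j}s}$ for the real roots and $s^{l}e^{a_{j}s}\bigl(\cos(b_{j}s)\pm i\sin(b_{j}s)\bigr)=s^{l}e^{(a_{j}\pm ib_{j})s}$ for the complex pairs. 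These $n$ functions are exactly the classical quasi-polynomials attached to the pairwise distinct roots $r_{1},\dots,r_{k},\lambda_{1},\bar\lambda_{1},\dots,\lambda_{m},\bar\lambda_{m}$ with the stated multiplicities, hence are linearly independent as functions of $s$ by the classical theory of linear constant-coefficient ODEs; since $s$ is a bijective function of $t$, this linear independence pulls back to the original functions on $(a,b)\cap(0,\infty)$.

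Once $n$ linearly independent solutions of $L_{\alpha,\beta}[y]=0$ are in hand, Theorem 3.8 immediately yields that they form a fundamental set of solutions of (8); alternatively one could verify via Theorem 3.5 that their $M$-Wronskian is nonzero at one point of $(a,b)$ and then invoke Theorem 3.6. The hard part is precisely the linear-independence step above; the rest is bookkeeping. Two points deserve care: that the substitution $s=\frac{\Gamma(\beta+1)}{\alpha}t^{\alpha}$ is legitimate on the interval in question, which uses only the standing hypothesis $t>0$ for the $M$-derivative, and that the hypothesis $\sum_{j=1}^{k}\mu_{j}+2\sum_{j=1}^{m}\sigma_{j}=n$ together with the distinctness of all the roots is exactly what ensures the collection has $n$ members built from distinct roots, which is the precise condition under which the classical quasi-polynomials are linearly independent.
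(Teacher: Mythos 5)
Your proposal is correct, and it is actually more substantive than the paper's own argument, which consists of the single sentence that the theorem ``follows from Corollary 4.1, Corollary 4.2 and Theorem 3.5.'' Those corollaries only establish that each listed function is a solution (and, implicitly, independence within the family attached to a single root or conjugate pair); neither they nor the cited Wronskian criterion of Theorem 3.5 is ever applied to show that the full collection of $n$ functions, pooled across \emph{all} distinct roots, is linearly independent --- which is exactly the point you correctly identify as the substantive step. Your route closes that gap: the substitution $s=\frac{\Gamma(\beta+1)}{\alpha}t^{\alpha}$ is a bijection of $(0,\infty)$ onto itself, and by property (6) of Theorem 2.1 together with the chain rule it converts $D^{\alpha,\beta}_{M}$ into $\frac{d}{ds}$, hence $L_{\alpha,\beta}$ into the classical constant-coefficient operator with the same characteristic polynomial; the displayed functions become the classical quasi-polynomials $s^{l}e^{r s}$ attached to pairwise distinct roots with the stated multiplicities, whose linear independence is standard and pulls back through the bijection. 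Concluding via Theorem 3.8 (fundamental set if and only if linearly independent) is then legitimate. The only caveats worth recording are minor: the argument lives on $t>0$, consistent with the standing hypotheses on the $M$-derivative, and the transport argument is really the same mechanism that underlies all of Section 4 (Lemma 4.1 onward), so nothing new is being smuggled in. In short, you and the paper reach the same statement, but you supply the cross-root independence argument that the paper omits.
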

\begin{proof}
The proof of the Theorem 4.1 is follows from Corollary 4.1, Corollary 4.2 and Theorem 3.5.
\end{proof}

\begin{Ex}
\begin{equation}
^{2}D^{\alpha,\beta}_{M}y+4D^{\alpha,\beta}_{M}y+3y=0
\end{equation}
The characteristic equation of (12) is $$r^{2}+4r+3=0$$
Therefore, the roots are $r=-3$ and $r=-1$
\\Hence, the general solution is 
$$y(t)=c_{1}e^{\frac{-3\Gamma(\beta+1)}{\alpha}t^{\alpha}}+c_{2} e^{\frac{-\Gamma(\beta+1)}{\alpha}t^{\alpha}}$$
\end{Ex}

\begin{Ex}
\begin{equation}
^{2}D^{\alpha,\beta}_{M}y-4D^{\alpha,\beta}_{M}y+4y=0
\end{equation}
The characteristic equation of (13) is
$$r^{2}-4r+4=0$$
The roots are $r_{1,2}=2$ 
\\Hence, the general solution is 
$$y(t)=\Bigg(c_{1}+c_{2}\frac{\Gamma(\beta+1)}{\alpha}t^{\alpha}\Bigg)e^{\frac{2\Gamma(\beta+1)}{\alpha}t^{\alpha}}$$
\end{Ex}

\begin{Ex}
\begin{equation}
^{2}D^{\alpha,\beta}_{M}y+4D^{\alpha,\beta}_{M}y+5y=0
\end{equation}
The characteristic equation of (14) is
$$r^{2}+4r+5=0$$
The roots are $r_{1}=-2+i$ and $r_{2}=-2-i$
\\Hence, the general solution is 
$$y(t)=e^{\frac{-2\Gamma(\beta+1)}{\alpha}t^{\alpha}}\Big[cos\Big(\frac{\Gamma(\beta+1)}{\alpha}t^{\alpha}\Big)+isin\Big(\frac{\Gamma(\beta+1)}{\alpha}t^{\alpha}\Big)\Big]$$
\end{Ex}

\section{Solution of Non-Homogeneous Case}
In this section, Method of variation of parameters is applied to derive the particular solution of the equation. 
\begin{equation}
^{n}D^{\alpha,\beta}_{M}y+p_{n-1}(t)^{n-1}D^{\alpha,\beta}_{M}y+...+p_{2}(t)^{2}D^{\alpha,\beta}_{M}y+p_{1}(t)D^{\alpha,\beta}_{M}y+p_{0}(t)y=f(t)
\end{equation}
where $y$ is $n$ times \textit{M}-differentiable function for $\alpha \in(0,1]$  and $\beta>0$.

\begin{mythm}
If $u(t)$ is a solution of homogeneous case of the equation (15) such that 
\begin{equation}
u(t)=\sum_{i=1}^{n}c_{i} y_{i}(t)
\end{equation}
then particular solution of the equation (15) is
$$v(t)=\sum_{i=1}^{n}c_{i}(t)y_{i}(t)$$
Where $c_{1}(t),c_{2}(t),...,c_{n}(t)$ provide following system of equations
$$\sum_{i=1}^{n}D^{\alpha,\beta}_{M}c_{i}(t)y_{i}(t)=0$$
$$\sum_{i=1}^{n}D^{\alpha,\beta}_{M}c_{i}(t)D^{\alpha,\beta}_{M}y_{i}(t)=0$$
$$\vdots$$
$$\sum_{i=1}^{n}D^{\alpha,\beta}_{M}c_{i}(t)^{n-2}D^{\alpha,\beta}_{M}y_{i}(t)=0$$
$$\sum_{i=1}^{n}D^{\alpha,\beta}_{M}c_{i}(t)^{n-1}D^{\alpha,\beta}_{M}y_{i}(t)=f(t)$$
\end{mythm}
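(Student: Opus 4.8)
The plan is to imitate the classical variation-of-parameters argument, replacing the ordinary derivative everywhere by $D^{\alpha,\beta}_M$ and using the product rule for the $M$-derivative and the linearity of $D^{\alpha,\beta}_M$ (Theorem~2.1), together with the linearity of $L_{\alpha,\beta}$ (Theorem~3.3). Note first that the hypothesis that $u$ is \emph{the} homogeneous solution expressed as $u=\sum_{i=1}^{n}c_iy_i$ presupposes that $\{y_1,\dots,y_n\}$ is a fundamental set of solutions of the homogeneous equation. I would then set $v(t)=\sum_{i=1}^{n}c_i(t)y_i(t)$ and, by the product rule, compute
\[
D^{\alpha,\beta}_M v=\sum_{i=1}^{n}D^{\alpha,\beta}_M c_i(t)\,y_i(t)+\sum_{i=1}^{n}c_i(t)\,D^{\alpha,\beta}_M y_i(t).
\]
Imposing the first equation of the asserted system, $\sum_{i=1}^{n}D^{\alpha,\beta}_M c_i(t)\,y_i(t)=0$, annihilates the first sum and leaves $D^{\alpha,\beta}_M v=\sum_{i=1}^{n}c_i(t)\,D^{\alpha,\beta}_M y_i(t)$, which has exactly the form of $v$ with each $y_i$ replaced by $D^{\alpha,\beta}_M y_i$.

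Next I would iterate this step. Applying $D^{\alpha,\beta}_M$ again and imposing $\sum_{i=1}^{n}D^{\alpha,\beta}_M c_i(t)\,D^{\alpha,\beta}_M y_i(t)=0$ yields ${}^{2}D^{\alpha,\beta}_M v=\sum_{i=1}^{n}c_i(t)\,{}^{2}D^{\alpha,\beta}_M y_i(t)$, and by an obvious induction, imposing $\sum_{i=1}^{n}D^{\alpha,\beta}_M c_i(t)\,{}^{k}D^{\alpha,\beta}_M y_i(t)=0$ for $k=0,1,\dots,n-2$ gives ${}^{k}D^{\alpha,\beta}_M v=\sum_{i=1}^{n}c_i(t)\,{}^{k}D^{\alpha,\beta}_M y_i(t)$ for every $k=0,1,\dots,n-1$. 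One last application of $D^{\alpha,\beta}_M$, with no further condition imposed, produces
\[
{}^{n}D^{\alpha,\beta}_M v=\sum_{i=1}^{n}D^{\alpha,\beta}_M c_i(t)\,{}^{n-1}D^{\alpha,\beta}_M y_i(t)+\sum_{i=1}^{n}c_i(t)\,{}^{n}D^{\alpha,\beta}_M y_i(t).
\]

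I would then substitute these expressions for ${}^{k}D^{\alpha,\beta}_M v$ into $L_{\alpha,\beta}[v]={}^{n}D^{\alpha,\beta}_M v+p_{n-1}(t){}^{n-1}D^{\alpha,\beta}_M v+\dots+p_0(t)v$ and regroup. The terms carrying a factor $c_i(t)$ assemble into $\sum_{i=1}^{n}c_i(t)\,L_{\alpha,\beta}[y_i]$, which vanishes because each $y_i$ solves the homogeneous equation; what remains is $L_{\alpha,\beta}[v]=\sum_{i=1}^{n}D^{\alpha,\beta}_M c_i(t)\,{}^{n-1}D^{\alpha,\beta}_M y_i(t)$. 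Requiring $L_{\alpha,\beta}[v]=f(t)$ is therefore equivalent to the last equation of the system, $\sum_{i=1}^{n}D^{\alpha,\beta}_M c_i(t)\,{}^{n-1}D^{\alpha,\beta}_M y_i(t)=f(t)$, so $v$ is a particular solution precisely when $D^{\alpha,\beta}_M c_1(t),\dots,D^{\alpha,\beta}_M c_n(t)$ satisfy the $n$ stated equations. Finally I would observe that the coefficient matrix of this linear system in the unknowns $D^{\alpha,\beta}_M c_i(t)$ is exactly the $M$-Wronskian matrix of $y_1,\dots,y_n$; since $\{y_1,\dots,y_n\}$ is a fundamental set, $W_{\alpha,\beta}(t)\neq0$ on $(a,b)$ by Theorem~3.7, so Cramer's rule solves uniquely for each $D^{\alpha,\beta}_M c_i(t)$, and the $c_i(t)$ themselves are recovered by $M$-integration (Theorem~2.3).

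The main obstacle is organizational rather than conceptual: one must check that imposing the auxiliary equations stage by stage is consistent and still leaves the final $n\times n$ system for $D^{\alpha,\beta}_M c_i(t)$ solvable --- which is exactly where the nonvanishing of the $M$-Wronskian is used --- and that the $M$-product rule genuinely reproduces the classical cascade of differentiation identities above. A secondary point is that the $c_i(t)$ exist as $M$-antiderivatives of the continuous expressions furnished by Cramer's rule, which follows from the standing continuity hypotheses on $\Gamma(\beta+1)t^{\alpha-1}p_j(t)$ and on $\Gamma(\beta+1)t^{\alpha-1}f(t)$.
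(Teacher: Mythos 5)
Your proposal is correct and follows essentially the same route as the paper: successive $M$-derivatives of $v$ via the product rule, imposing the auxiliary conditions at each stage, substituting into the operator and using $L_{\alpha,\beta}[y_i]=0$ to isolate the final condition. Your added remarks on the nonvanishing of the $M$-Wronskian and the recovery of the $c_i(t)$ by $M$-integration make the solvability of the system explicit, which the paper leaves implicit.
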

\begin{proof}
The solution of the equation (15) is in the form
$$v(t)=\sum_{i=1}^{n}c_{i}(t)y_{i}(t)$$
The \textit{M}-derivative of $v(t)$ for $\alpha \in (0,1]$ and $\beta>0$ will be
$$D^{\alpha,\beta}_{M}v(t)=\sum_{i=1}^{n}c_{i}(t)D^{\alpha,\beta}_{M}y_{i}(t)+ \sum_{i=1}^{n}D^{\alpha,\beta}_{M} c_{i}(t)y_{i}(t)$$
Applying the first condition $\sum_{i=1}^{n}D^{\alpha,\beta}_{M}c_{i}(t)y_{i}(t)=0$, we obtain
$$D^{\alpha,\beta}_{M}v(t)=\sum_{i=1}^{n}c_{i}(t)D^{\alpha,\beta}_{M}y_{i}(t)$$
If we calculate the \textit{M}-derivative of $D^{\alpha,\beta}_{M}v(t)$ for $\alpha\in(0,1]$ and $\beta>0$, then we get
$$^{2}D^{\alpha,\beta}_{M}v(t)=\sum_{i=1}^{n}c_{i}(t)^{2}D^{\alpha,\beta}_{M}y_{i}(t)+ \sum_{i=1}^{n}D^{\alpha,\beta}_{M} c_{i}(t)D^{\alpha,\beta}_{M}y_{i}(t)$$
Apply second condition $\sum_{i=1}^{n}D^{\alpha,\beta}_{M}c_{i}(t)D^{\alpha,\beta}_{M}y_{i}(t)=0$, we obtain
$$^{2}D^{\alpha,\beta}_{M}v(t)=\sum_{i=1}^{n}c_{i}(t)^{2}D^{\alpha,\beta}_{M}y_{i}(t)$$
By continuing in this way, we get
$$^{n-1}D^{\alpha,\beta}_{M}v(t)=\sum_{i=1}^{n}c_{i}(t)^{n-1}D^{\alpha,\beta}_{M}y_{i}(t)+ \sum_{i=1}^{n}D^{\alpha,\beta}_{M} c_{i}(t)^{n-2}D^{\alpha,\beta}_{M}y_{i}(t)$$
We substitute  $v(t),^{2}D^{\alpha,\beta}_{M}v(t),..., ^{n}D^{\alpha,\beta}_{M}v(t)$ in the equation (15), we have
\begin{multline*}
\sum_{i=1}^{n}c_{i}(t)^{n}D^{\alpha,\beta}_{M}y_{i}(t)+\sum_{i=1}^{n}D^{\alpha,\beta}_{M} c_{i}(t)^{n-1}D^{\alpha,\beta}_{M}y_{i}(t)+p_{n-1}\sum_{i=1}^{n} c_{i}(t)^{n-1}D^{\alpha,\beta}_{M}y_{i}(t)+...+
\\p_{2}\sum_{i=1}^{n}c_{i}(t)^{2}D^{\alpha,\beta}_{M}y_{i}(t)+p_{1}\sum_{i=1}^{n}c_{i}(t)D^{\alpha,\beta}_{M}y_{i}(t)+p_{0}\sum_{i=1}^{n} c_{i}(t)y_{i}(t)=f(t)
\end{multline*}
\begin{multline*}
\sum_{i=1}^{n}D^{\alpha,\beta}_{M}c_{i}(t)^{n-1}D^{\alpha,\beta}_{M}y_{i}(t)+\sum_{i=1}^{n}
c_{i}(t)\big[^{n}D^{\alpha,\beta}_{M}y_{i}(t)+p_{n-1}^{n-1}D^{\alpha,\beta}_{M}y_{i}(t)+...+\\p_{1}D^{\alpha,\beta}_{M}y_{i}(t)+p_{0}y_{i}(t)\big]=f(t)
\end{multline*}
Since $y_{1}(t),y_{2}(t),...,y_{n}(t)$ are solutions of homogeneous case of equation (8), then
$$\sum_{i=1}^{n}
c_{i}(t)\big[^{n}D^{\alpha,\beta}_{M}y_{i}(t)+p_{n-1}^{n-1}D^{\alpha,\beta}_{M}y_{i}(t)+...+\\p_{1}D^{\alpha,\beta}_{M}y_{i}(t)+p_{0}y_{i}(t)\big]=0$$
We obtain $n^{th}$ condition as
$$\sum_{i=1}^{n}D^{\alpha,\beta}_{M}c_{i}(t)^{n-1}D^{\alpha,\beta}_{M}y_{i}(t)=f(t)$$
Hence we obtain the following system
$$\sum_{i=1}^{n}D^{\alpha,\beta}_{M}c_{i}(t)y_{i}(t)=0$$
$$\sum_{i=1}^{n}D^{\alpha,\beta}_{M}c_{i}(t)D^{\alpha,\beta}_{M}y_{i}(t)=0$$
\begin{equation}
\vdots
\end{equation}
$$\sum_{i=1}^{n}D^{\alpha,\beta}_{M}c_{i}(t)^{n-2}D^{\alpha,\beta}_{M}y_{i}(t)=0$$
$$\sum_{i=1}^{n}D^{\alpha,\beta}_{M}c_{i}(t)^{n-1}D^{\alpha,\beta}_{M}y_{i}(t)=f(t)$$
Solving the above system (17) provides $D^{\alpha,\beta}_{M}c_{i}(t)$,   $i=1,2,...,n$. Therefore we can write the particular solution of equation (15) as $v(t)=\sum_{i=1}^{n}c_{i}(t)y_{i}(t)$ 
\end{proof}

\begin{Ex}
 $$^{2}D^{\alpha,\beta}_{M}y+4D^{\alpha,\beta}_{M}y+3y=f(t)$$
 (a) Let $f(t)=e^{2t^{\alpha}}$. For $v(t)=c_{1}(t) e^{\frac{-3\Gamma(\beta+1)}{\alpha}t^{\alpha}}+c_{2}(t) e^{\frac{-\Gamma(\beta+1)}{\alpha}t^{\alpha}}$, the system of equations are built by the conditions as following
 $$D^{\alpha,\beta}_{M}c_{1}(t) e^{\frac{-3\Gamma(\beta+1)}{\alpha}t^{\alpha}}+D^{\alpha,\beta}_{M}c_{2}(t) e^{\frac{-\Gamma(\beta+1)}{\alpha}t^{\alpha}}=0$$
 $$-3D^{\alpha,\beta}_{M}c_{1}(t) e^{\frac{-3\Gamma(\beta+1)}{\alpha}t^{\alpha}}-D^{\alpha,\beta}_{M}c_{2}(t) e^{\frac{-\Gamma(\beta+1)}{\alpha}t^{\alpha}}=e^{2t^{\alpha}}$$
 Solving the above system of equations and using \textit{M}-integral we obtain  $c_{1}(t)=\frac{-\Gamma(\beta+1)}{4\alpha+6\Gamma(\beta+1)} e^{\frac{2\alpha+3\Gamma(\beta+1)}{\alpha}t^{\alpha}},c_{2}(t)=\frac{\Gamma(\beta+1)}{4\alpha+2\Gamma(\beta+1)} e^{\frac{2\alpha+\Gamma(\beta+1)}{\alpha}t^{\alpha}}$. Then particular solution $v(t)$ is
$$ v(t)=\frac{\Gamma(\beta+1)^{2}}{4\alpha^{2}+8\alpha\Gamma(\beta+1)+3\Gamma(\beta+1)^{2}} e^{2t^{\alpha}}$$ 	
(b)	Let $f(t)=2t^{2\alpha}+t^{\alpha}-3$. The system of equations for this case is
$$D^{\alpha,\beta}_{M}c_{1}(t) e^{\frac{-3\Gamma(\beta+1)}{\alpha}t^{\alpha}}+D^{\alpha,\beta}_{M}c_{2}(t) e^{\frac{-\Gamma(\beta+1)}{\alpha}t^{\alpha}}=0$$
 $$-3D^{\alpha,\beta}_{M}c_{1}(t) e^{\frac{-3\Gamma(\beta+1)}{\alpha}t^{\alpha}}-D^{\alpha,\beta}_{M}c_{2}(t) e^{\frac{-\Gamma(\beta+1)}{\alpha}t^{\alpha}}=2t^{2\alpha}+t^{\alpha}-3$$
Solve this system of equations, we have
\begin{multline*}
c_{1}(t)=\frac{-1}{3}t^{2\alpha}e^{\frac{3\Gamma(\beta+1)}{\alpha}t^{\alpha}}+\Big(\frac{4\alpha-3\Gamma(\beta+1)}{18\Gamma(\beta+1)}\Big) t^{\alpha} e^{\frac{3\Gamma(\beta+1)}{\alpha}t^{\alpha}}\\
+\Bigg(\frac{-4\alpha^{2}+3\alpha\Gamma(\beta+1)+27\Gamma(\beta+1)^{2}}{54\Gamma(\beta+1)^{2}}\Bigg)e^{\frac{3\Gamma(\beta+1)}{\alpha}t^{\alpha}}
\end{multline*}
\begin{multline*}
c_{2}(t)=t^{2\alpha}e^{\frac{\Gamma(\beta+1)}{\alpha}t^{\alpha}}+\Big(\frac{\Gamma(\beta+1)-4\alpha}{2\Gamma(\beta+1)}\Big) t^{\alpha} e^{\frac{\Gamma(\beta+1)}{\alpha}t^{\alpha}}\\
+\Bigg(\frac{4\alpha^{2}-\alpha\Gamma(\beta+1)-3\Gamma(\beta+1)^{2}}{2\Gamma(\beta+1)^{2}}\Bigg)e^{\frac{\Gamma(\beta+1)}{\alpha}t^{\alpha}}
\end{multline*}
Hence, particular solution $v(t)$ is obtained by
$$v(t)=\frac{2}{3}t^{2\alpha}+\Big(\frac{3\Gamma(\beta+1)-16\alpha}{9\Gamma(\beta+1)}\Big) t^{\alpha} e^{\frac{\Gamma(\beta+1)}{\alpha}t^{\alpha}}\\
+\Bigg(\frac{52\alpha^{2}-12\alpha\Gamma(\beta+1)-27\Gamma(\beta+1)^{2}}{27\Gamma(\beta+1)^{2}}\Bigg)e^{\frac{\Gamma(\beta+1)}{\alpha}t^{\alpha}}$$

(c)	Let $f(t)=sin2t^{\alpha}$. The system of equations for this case is
$$D^{\alpha,\beta}_{M}c_{1}(t) e^{\frac{-3\Gamma(\beta+1)}{\alpha}t^{\alpha}}+D^{\alpha,\beta}_{M}c_{2}(t) e^{\frac{-\Gamma(\beta+1)}{\alpha}t^{\alpha}}=0$$
 $$-3D^{\alpha,\beta}_{M}c_{1}(t) e^{\frac{-3\Gamma(\beta+1)}{\alpha}t^{\alpha}}-D^{\alpha,\beta}_{M}c_{2}(t) e^{\frac{-\Gamma(\beta+1)}{\alpha}t^{\alpha}}=sin2t^{\alpha}$$
Solve this system of equations, we have
$$c_{1}(t)=\frac{-3\Gamma(\beta+1)^{2}}{8\alpha^{2}+18\Gamma(\beta+1)^2}  e^{\frac{3\Gamma(\beta+1)}{\alpha}t^{\alpha}}sin2t^{\alpha}
+\frac{\alpha\Gamma(\beta+1)}{4\alpha^{2}+9\Gamma(\beta+1)^{2}}e^{\frac{3\Gamma(\beta+1)}{\alpha}t^{\alpha}}cos2t^{\alpha}$$
$$c_{2}(t)=\frac{\Gamma(\beta+1)^{2}}{8\alpha^{2}+2\Gamma(\beta+1)^2}  e^{\frac{\Gamma(\beta+1)}{\alpha}t^{\alpha}}sin2t^{\alpha}\\
-\frac{\alpha\Gamma(\beta+1)}{4\alpha^{2}+\Gamma(\beta+1)^{2}}e^{\frac{3\Gamma(\beta+1)}{\alpha}t^{\alpha}}cos2t^{\alpha}$$
Hence, particular solution $v(t)$ is obtained by 
\begin{multline*}
v(t)=\frac{-4\alpha^{2}\Gamma(\beta+1)^{2}+3\Gamma(\beta+1)^{4}}{16\alpha^{4}+40\alpha^{2}\Gamma(\beta+1)^{2}+9\Gamma(\beta+1)^4}sin2t^{\alpha}\\
-\frac{8\alpha\Gamma(\beta+1)^{3}}{16\alpha^{4}+40\alpha^{2}\Gamma(\beta+1)^{2}+9\Gamma(\beta+1)^4}cos2t^{\alpha}
\end{multline*}
(d)	Let $f(t)=e^{2t^{\alpha}}t^{\alpha}$. The system of equations for this case is
$$D^{\alpha,\beta}_{M}c_{1}(t) e^{\frac{-3\Gamma(\beta+1)}{\alpha}t^{\alpha}}+D^{\alpha,\beta}_{M}c_{2}(t) e^{\frac{-\Gamma(\beta+1)}{\alpha}t^{\alpha}}=0$$
 $$-3D^{\alpha,\beta}_{M}c_{1}(t) e^{\frac{-3\Gamma(\beta+1)}{\alpha}t^{\alpha}}-D^{\alpha,\beta}_{M}c_{2}(t) e^{\frac{-\Gamma(\beta+1)}{\alpha}t^{\alpha}}=e^{2t^{\alpha}}t^{\alpha}$$
Solve this system of equations, we have
$$c_{1}(t)=\frac{-\Gamma(\beta+1)}{4\alpha+6\Gamma(\beta+1)}t^{\alpha}e^{\frac{2\alpha+3\Gamma(\beta+1)}{\alpha}t^{\alpha}}  +\frac{\alpha\Gamma(\beta+1)}{2\big(2\alpha+3\Gamma(\beta+1)\big)^{2}}e^{\frac{2\alpha+3\Gamma(\beta+1)}{\alpha}t^{\alpha}}$$
$$c_{2}(t)=\frac{\Gamma(\beta+1)}{4\alpha+2\Gamma(\beta+1)}t^{\alpha}e^{\frac{2\alpha+\Gamma(\beta+1)}{\alpha}t^{\alpha}}  -\frac{\alpha\Gamma(\beta+1)}{2\big(2\alpha+\Gamma(\beta+1)\big)^{2}}e^{\frac{2\alpha+\Gamma(\beta+1)}{\alpha}t^{\alpha}}$$
Hence, particular solution $v(t)$ is obtained by
$$v(t)=\frac{\Gamma(\beta+1)^{2}}{4\alpha^{2}+8\alpha\Gamma(\beta+1)+3\Gamma(\beta+1)^2}t^{\alpha}e^{2t^{\alpha}}-\frac{4\alpha^{2}\Gamma(\beta+1)^{2}+4\alpha\Gamma(\beta+1)^{3}}{\big(4\alpha^{2}+8\alpha\Gamma(\beta+1)+3\Gamma(\beta+1)^{2}\big)^{2}}e^{2t^{\alpha}}$$

(e)	Let $f(t)=e^{-4t^{\alpha}}$. Take $\alpha\neq \frac{3}{4}$ and $\alpha\neq \frac{1}{4}$, the system of equations for this case is
$$D^{\alpha,\beta}_{M}c_{1}(t) e^{\frac{-3\Gamma(\beta+1)}{\alpha}t^{\alpha}}+D^{\alpha,\beta}_{M}c_{2}(t) e^{\frac{-\Gamma(\beta+1)}{\alpha}t^{\alpha}}=0$$
 $$-3D^{\alpha,\beta}_{M}c_{1}(t) e^{\frac{-3\Gamma(\beta+1)}{\alpha}t^{\alpha}}-D^{\alpha,\beta}_{M}c_{2}(t) e^{\frac{-\Gamma(\beta+1)}{\alpha}t^{\alpha}}=e^{-4t^{\alpha}}$$
Solve this system of equations, we have
$$c_{1}(t)=\frac{\Gamma(\beta+1)}{8\alpha-6\Gamma(\beta+1)}e^{\frac{3\Gamma(\beta+1)-4\alpha}{\alpha}t^{\alpha}}$$
$$c_{2}(t)=\frac{\Gamma(\beta+1)}{2\Gamma(\beta+1)-8\alpha}e^{\frac{\Gamma(\beta+1)-4\alpha}{\alpha}}t^{\alpha}$$
Hence, we obtain particular solution $v(t)$ as following:
$$v(t)=\frac{\Gamma(\beta+1)^{2}}{16\alpha^{2}-16\alpha\Gamma(\beta+1)+3\Gamma(\beta+1)^{2}}e^{-4t^{\alpha}}$$
Take $\alpha=\frac{3}{4}$ and $v(t)=c_{1}(t)e^{-4\Gamma(\beta+1)t^{\frac{3}{4}}}+c_{2}(t) e^{\frac{-4\Gamma(\beta+1)}{3}t^{\frac{3}{4}}}$
\\The system of equations is
$$D^{\frac{3}{4},\beta}_{M}c_{1}(t) e^{-4\Gamma(\beta+1)t^{\frac{3}{4}}}+D^{\frac{3}{4},\beta}_{M}c_{2}(t) e^{\frac{-4\Gamma(\beta+1)}{3}t^{\frac{3}{4}}}=0$$
 $$-3D^{\frac{3}{4},\beta}_{M}c_{1}(t) e^{-4\Gamma(\beta+1)t^{\frac{3}{4}}}-D^{\frac{3}{4},\beta}_{M}c_{2}(t) e^{\frac{-4\Gamma(\beta+1)}{3}t^{\frac{3}{4}}}=e^{-4t^{\frac{3}{4}}}$$
We solve the above equation, $c_{1}(t)=-\frac{\Gamma(\beta+1)}{-6+6\Gamma(\beta+1)} e^{\big(-4+4\Gamma(\beta+1)\big)t^{\frac{3}{4}}}$ and
$c_{2}(t)=\frac{\Gamma(\beta+1)}{-6+2\Gamma(\beta+1)}e^{\big(-12+4\Gamma(\beta+1)\big)t^{\frac{3}{4}}}$ is obtained.\\
The particular solution is $v(t)=\frac{\Gamma(\beta+1)^{2}}{9-12\Gamma(\beta+1)+3\Gamma(\beta+1)^{2}}e^{-4t^{\frac{3}{4}}}$

Take $\alpha=\frac{1}{4}$ and $v(t)=c_{1}(t)e^{-12\Gamma(\beta+1)t^{\frac{1}{4}}}+c_{2}(t) e^{-4\Gamma(\beta+1)t^{\frac{1}{4}}}$
\\The system of equations is
$$D^{\frac{1}{4},\beta}_{M}c_{1}(t) e^{-12\Gamma(\beta+1)t^{\frac{1}{4}}}+D^{\frac{1}{4},\beta}_{M}c_{2}(t) e^{-4\Gamma(\beta+1)t^{\frac{1}{4}}}=0$$
 $$-3D^{\frac{1}{4},\beta}_{M}c_{1}(t) e^{-12\Gamma(\beta+1)t^{\frac{1}{4}}}-D^{\frac{1}{4},\beta}_{M}c_{2}(t) e^{-4\Gamma(\beta+1)t^{\frac{1}{4}}}=e^{-4t^{\frac{1}{4}}}$$
We solve the above equation,$ c_{1}(t)=-\frac{\Gamma(\beta+1)}{-2+6\Gamma(\beta+1)} e^{\big(-4+12\Gamma(\beta+1)\big)t^{\frac{1}{4}}}$ and
$c_{2}(t)=\frac{\Gamma(\beta+1)}{-2+2\Gamma(\beta+1)}e^{\big(-4+4\Gamma(\beta+1)\big)t^{\frac{1}{4}}}$ is obtained.\\
The particular solution is $v(t)=\frac{4\Gamma(\beta+1)^{2}}{4-16\Gamma(\beta+1)+12\Gamma(\beta+1)^{2}}e^{-4t^{\frac{1}{4}}}$
\end{Ex}

\section{Conclusion}
In this paper, Existences and Uniqueness theorems for sequential linear \textit{M}-fractional differential equations are presented. We give solution of  \textit{M}-fractional differential equations with constants for homogeneous case using 
fractional exponential function and for non homogeneous case, we applied
method of variation of parameters.


\begin{thebibliography}{99}
\bibitem{r1} Błasik, Marek, Numerical scheme for a two-term sequential fractional differential equation, Prace Naukowe Instytutu Matematykii Informatyki Politechniki Częstochowskiej 10.2 (2011) 17-29. 
\bibitem{r2} B.Bonilla, Margarita Rivero, Juan J. Trujillo, On systems of linear fractional differential equations with constant coefficients, Applied Mathematics and Computation 187.1 (2007) 68-78. 
\bibitem{r3} Bonilla, Blanca, Margarita Rivero, and Juan J. Trujillo, Linear differential equations of fractional order, Advances in fractional calculus. Springer
Netherlands, (2007) 77-91.
\bibitem{r4} M. B. Finan, A Second Course in Elementary Ordinary Differential      Equations, Arkansas Tech University, 2013.
\bibitem{r5} A. Gökdoğan, E. Ünal and E. Çelik, Existence and Uniqueness Theorems for Sequential Linear Conformable Fractional Differential Equations, Miskolc Mathematical Notes, 17(No 1) (2016)267-279. DOI:10.18514/MMN.2016.1635.
\bibitem{r6} U.N.Katugampola, A new fractional derivative with classical properties.
arXiv:1410.6535v2, (2014).
\bibitem{r7} R.Khalil, M.Al Horani, A.Yousef, and M.Sababheh, M., A new Definition 
of Fractional Derivative, J. Comput. Appl. Math. 264 (2014) 6570. 
\bibitem{r8} A. Kilbas, H. Srivastava, J. Trujillo, Theory and Applications of Fractional Differential Equations, in:Math. Studies.,North-Holland, New   
York, 2006. 
\bibitem{r9} Kilbas, Anatoly A, $\alpha$-Analytic solutions of some linear fractional differential equations with variable coefficients, Applied mathematics and 
computation 187.1 (2007): 239-249. 
\bibitem{r10} Klimek M, Sequential fractional differential equations with Hadamard derivative, Communications in Nonlinear Science and Numerical Simulation 16.12 (2011): 4689-4697. 
\bibitem{r11} G.B.Loghmani,S. Javanmardi,Numerical methods for sequential
fractional differential equations for Caputo operator,Bull. Malays. Math. Sci. Soc.(2) 35.2 (2012). 
\bibitem{r12} K.S. Miller, An Introduction to Fractional Calculus and Fractional Differential Equations, J. Wiley and Sons, New York, 1993. 
\bibitem{r13} I. Podlubny, Fractional Differential Equations, Academic Press, USA,1999.
\bibitem{r14}E.Unal, A.Gökdoğan, E.Çelik, General solution to sequential linear conformable fractional differential equations with constant coefficients. arXiv:1602.01452v1 [math.CA] (2016).
\bibitem{r15}J. Vanterler da C. Sousa, E. Capelas de Oliveira, On the local M-derivative,Progress in Fractional Differentiation and Applications, 4, No. 4 (2018) 479-492.
\end{thebibliography}
\end{document}